\newtheorem{thm}{Theorem}[section]
\newtheorem{cor}[thm]{Corollary}
\newtheorem{lem}[thm]{Lemma}
\newtheorem{prop}[thm]{Proposition}
\theoremstyle{definition}
\newtheorem{defn}[thm]{Definition}
\theoremstyle{remark}
\newtheorem{rem}[thm]{Remark}
\numberwithin{equation}{section}
\begin{document}

\title[On the curvature ODE]{On the curvature ODE associated to the Ricci flow}
\author{Atreyee Bhattacharya}

\address{department of mathematics,
Indian Institute of Science, Bangalore 560012, India}
\email{atreyee@math.iisc.ernet.in}


\thanks{This work was partially supported by UGC}


\vspace{3mm}

\begin{abstract}
In the vector space of algebraic curvature operators we study the reaction ODE $$\frac{dR}{dt} = R^2+R^{\#}= Q(R)$$  which is associated to the evolution equation of the Riemann curvature operator along the Ricci flow. More precisely, we analyze the stability of a special class of zeros of this ODE up to suitable normalization. In particular, we show that the ODE is unstable near the curvature operators of the Riemannian product spaces $M \times \mathbb{R}^k, \ k \geq 0$ where $M$ is an Einstein (locally) symmetric space of compact type and not a spherical space form when $k=0.$
\end{abstract}

\thanks{Mathematics Subject Classification (1991): Primary 53C21, Secondary 53C20}

\maketitle

\section{Introduction}
The Ricci flow, introduced by Richard Hamilton ~\cite{H1}, is a geometric evolution equation on Riemannian manifolds. Let $(M,g)$ be a compact Riemannian manifold without boundary. A smooth $1$-parameter family $g(t), t \in [0,T]$ of Riemannian metrics on $(M,g)$ is said to satisfy the Ricci flow starting at $g$ if the following equation holds
{\small \begin{equation}\label{flow}
\frac{\partial g(t)}{\partial t} = -{2Ric_{g(t)}}, \ \ g(0) = g.
\end{equation}}
One also considers normalized Ricci flows where the flow $\tilde{g}(t)$ is of the form $\tilde{g}(t)= f(x,t)g(t)$
where $f : M \times [0,T] \rightarrow \mathbb{R}^{+}$ is a positive smooth function. A well studied example is when $f(x,t)= vol(g(t))^{\frac{-2}{n}}.$ We will refer to the corresponding equation
{\small \begin{equation}\label{flowtwo}
\frac{\partial \tilde{g}(t)}{\partial t} = F(\tilde{g}(t))
\end{equation}}
\noindent
as well as the solution curve $\tilde{g}(t)$ as a normalized Ricci flow abbreviated as NRF.
A Riemannian metric $g_0$ is said to be a \textit{stable} fixed point of the NRF ~\eqref{flowtwo} if the normalized flow $\tilde{g}(t)$ starting at $g_0$ is identically equal to $g_0$ and there exists a neighborhood of $g_0$ such that the normalized flow starting at any point in that neighborhood converges to $g_0$. We will call such a neighborhood \textit{stable}. One observes that suitable normalizations of the Ricci flow are needed as fixed points of the unnormalized flow \eqref{flow} are Ricci-flat i.e., do not even include the round metric, whereas Einstein metrics that are not Ricci-flat can be realized as fixed points of some NRF.

One would like to find the largest possible stable neighborhoods of stable fixed points of the NRF ~\eqref{flowtwo}.  For instance, results due to B\"ohm-Wilking ~\cite{BW} and Brendle-Schoen ~\cite{BS} show that the round metric on sphere is a stable fixed point of a suitable NRF and neighborhoods of this metric consisting of Riemannian metrics with $2$-positive curvature operators and strictly quarter pinched curvature respectively are stable.

To understand the stability of a fixed point $g_0$ of ~\eqref{flowtwo}, we go back to ~\eqref{flow} and study the reaction ODE associated to the evolution of Riemann curvature operator along ~\eqref{flow}. In general, it may not be possible to infer the stability of $g_0$ solely by analyzing the behavior of the reaction ODE near the corresponding curvature operator $\mathfrak{R}_{g_0}$. But in some cases using Hamilton's maximum principle, one might be able to do so. This is briefly explained below.

Under the (unnormalized) Ricci flow \eqref{flow} the Riemann curvature operator {\small $\mathfrak{R} = \mathfrak{R}(t): \wedge^2 TM \rightarrow  \wedge^2 TM$} evolves by
{\small \begin{equation}\label{pde}
\frac{\partial \mathfrak{R}}{\partial t} = \Delta \mathfrak{R} + 2Q(\mathfrak{R})
\end{equation}}
\noindent
where {\small $$Q(\mathfrak{R})= \mathfrak{R}^2 + \mathfrak{R}^{\#}$$}
\noindent
is a homogenous quadratic expression in the components of {\small$\mathfrak{R}$}.
The ordinary differential equation corresponding to the reaction term of \eqref{pde} is the following homogeneous ODE of degree $2$
{\small \begin{equation}\label{reaone}
\frac{dR}{dt} = Q(R)
\end{equation}}

We denote the space of algebraic curvature operators i.e., the vector space of self-adjoint endomorphisms of {\small $\wedge^2 \mathbb{R}^n$} satisfying the Bianchi identity, by {\small $S^2_{B}(\wedge^2\mathbb{R}^n).$}
Hamilton's maximum principle ~\cite{H1} for tensors says that a closed convex $O(n)$-invariant subset $C$ of {\small $S^2_{B}(\wedge^2\mathbb{R}^n)$} which is invariant under \eqref{reaone}, is a Ricci flow invariant curvature condition i.e., metrics on compact manifolds whose curvature operators at each point lie inside $C,$ evolve under the Ricci flow into metrics with the same curvature property.

In order to use the maximum principle to show the stability of a fixed point $g_0$ of ~\eqref{flowtwo} one constructs a `pinching family' of closed convex $O(n)$ invariant sets around $\mathfrak{R}_{g_0}$ in {\small $S^2_{B}(\wedge^2\mathbb{R}^n).$} This has been accomplished for positive curvature operators by B\"ohm-Wilking ~\cite{BW} and for quarter pinched curvature operators by Brendle-Schoen ~\cite{BS}. In these cases the existence of a pinching family implies that $\mathfrak{R}_{g_0}$ is a stable fixed point or zero of a suitable normalization of the reaction ODE ~\eqref{reaone}.

In this paper we study the zeros of ~\eqref{reaone} for a specific normalization where the solution curves have constant scalar curvature one.
Let $s(R)$ denote the scalar curvature of {\small $R \in S^2_{B}(\wedge^2\mathbb{R}^n).$}  Let {\small $R(t) \in \{R \in S^2_{B}(\wedge^2\mathbb{R}^n): s(R) > 0\}$} be a solution curve to the ODE \eqref{reaone}. Then $\bar{R}(t)= \frac{R(t)}{s(R(t))}$ satisfies the following equation

{\small \begin{equation}\label{scalone}
\frac{d\bar{R}}{dt} = s(R)\bigg(Q(\bar{R})- \|Ric(\bar{R})\|^2\bar{R} \bigg)
\end{equation}}
\noindent
where we use
{\small $$\frac{d}{dt}(s(R)) = s(Q(R)) = \|Ric(R)\|^2$$}
\noindent
(which follows from formula \eqref{scal}) and the fact that
the open half space {\small $\{R \in S^2_{B}(\wedge^2\mathbb{R}^n): s(R) > 0\}$} is preserved by \eqref{reaone}.
Reparametrizing \eqref{scalone} suitably (see Remark \ref{repara}) one obtains the associated ODE
\begin{equation}\label{reatwo}
\begin{cases}
\frac{d\widetilde{R}}{d\tau} = Q(\widetilde{R})- \|Ric(\widetilde{R})\|^2\widetilde{R} \\
s(\widetilde{R})=1
\end{cases}
\end{equation}

One further observes that the modified ODE \eqref{reatwo} has the special property that curvature operator of any Einstein (locally) symmetric space with unit scalar curvature is a zero of it. More generally, curvature operator of the Riemannian product {\small $M \times \mathbb{R}^k,$} where $\mathbb{R}^k$ denotes the Euclidean $k$-space with $k \geq 0$ and $M$ is any (locally) symmetric Einstein manifold with unit scalar curvature, is a zero of \eqref{reatwo} ( See Proposition \ref{prop} for details).

Define the vector field $\widetilde{Q}$ on the hyperplane {\small $${{{\mathbf{S}}}}_{1} = \{R \in S^2_B(\wedge^2\mathbb{R}^n) \;:\; s(R) = 1\} \subset S^2_B(\wedge^2\mathbb{R}^n),$$}
\noindent
by {\small $$\widetilde{Q} : R \mapsto \widetilde{Q}(R) = Q(R)- \|Ric(R)\|^2R, \ \ R \in \mathbf{S}_1.$$}
\noindent
One first observes that
\begin{prop}
If $(M,g)$ is a closed Einstein manifold with unit (respectively zero) scalar curvature, then at each point $p \in M$, the curvature operator $\mathfrak{R}_p$ of $g$ is a zero of $\widetilde{Q}$ (respectively $Q$) if and only if $(M^n,g)$ is locally symmetric (respectively flat).
\end{prop}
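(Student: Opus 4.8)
The plan is to convert the \emph{algebraic} condition ``$\mathfrak{R}_p$ is a zero of $\widetilde{Q}$'' into the \emph{differential} condition ``$\mathfrak{R}$ is parallel'', by means of a pointwise identity valid on any Einstein manifold, and then to exploit compactness through an integration by parts. First I would observe that an Einstein metric evolves under the unnormalized flow \eqref{flow} purely by scaling: if $Ric_g = \lambda g$, then $g(t) = (1-2\lambda t)g$ solves \eqref{flow}, since the Ricci tensor is invariant under scaling. Consequently the curvature operator scales inversely, so that $\frac{\partial}{\partial t}\mathfrak{R}\big|_{t=0} = 2\lambda\,\mathfrak{R}$. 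Comparing this with the evolution equation \eqref{pde} yields the pointwise identity
\begin{equation*}
\Delta\mathfrak{R} = 2\lambda\,\mathfrak{R} - 2\,Q(\mathfrak{R})
\end{equation*}
on any Einstein manifold with $Ric = \lambda g$ (as a sanity check, on the round sphere both sides vanish, consistent with $\nabla R = 0$).

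Next I would specialize the constant. For unit scalar curvature one has $\lambda = \tfrac{1}{n} = \norm{Ric(\mathfrak{R})}^2$, so the identity reads $\Delta\mathfrak{R} = -2\,\widetilde{Q}(\mathfrak{R})$; for zero scalar curvature $\lambda = 0$ and $\norm{Ric(\mathfrak{R})}^2 = 0$, so $\widetilde{Q} = Q$ and $\Delta\mathfrak{R} = -2\,Q(\mathfrak{R})$. In either case $\mathfrak{R}_p$ is a zero of the relevant vector field at every point $p$ precisely when $\Delta\mathfrak{R} \equiv 0$. Since $M$ is closed and $\Delta$ is the rough Laplacian $g^{ij}\nabla_i\nabla_j = -\nabla^{*}\nabla$, pairing with $\mathfrak{R}$ and integrating by parts gives $\int_M\langle\Delta\mathfrak{R},\mathfrak{R}\rangle\,dV = -\int_M\norm{\nabla\mathfrak{R}}^2\,dV$, whence $\Delta\mathfrak{R}\equiv 0$ if and only if $\nabla\mathfrak{R} = 0$, i.e.\ if and only if $(M,g)$ is locally symmetric. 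This settles the unit scalar curvature case in both directions.

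For the Ricci-flat case it remains to upgrade ``locally symmetric with $Ric = 0$'' to ``flat''. Here I would pass to the universal cover, which (being complete, simply connected and locally symmetric) is a globally symmetric space, and apply the de Rham decomposition to split it into a Euclidean factor together with irreducible symmetric factors; each non-flat irreducible factor is Einstein with nonzero scalar curvature, so $Ric \equiv 0$ forces all factors to be Euclidean and hence $R \equiv 0$. The reverse implication is immediate, since $Q(0) = 0$ and a flat metric is trivially locally symmetric.

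The main obstacle is the first step: recognizing that on Einstein metrics the reaction term $Q(\mathfrak{R})$ coincides, up to the linear term $\lambda\mathfrak{R}$, with minus one half the Laplacian of the curvature. This is precisely what turns the algebraic equation $\widetilde{Q}(\mathfrak{R}) = 0$ into the elliptic equation $\Delta\mathfrak{R} = 0$; once this identity is in hand, the integration by parts, which crucially uses compactness of $M$, delivers $\nabla R = 0$ and completes the argument.
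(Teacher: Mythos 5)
Your argument is correct and follows essentially the same route as the paper: the paper invokes the same pointwise identity $\Delta R_g + 2Q(R_g) = \tfrac{2}{n}R_g$ on a unit-scalar-curvature Einstein manifold (citing it from Topping's notes rather than deriving it from the scaling solution of the Ricci flow as you do), concludes $\Delta R_g = 0$ from $\widetilde{Q}(\mathfrak{R})=0$, and uses closedness of $M$ to pass to $\nabla R_g = 0$. The only substantive difference is that you spell out the step ``Ricci-flat and locally symmetric implies flat'' via the de Rham decomposition, which the paper dispatches with ``for the Ricci flat case the proof is similar''; your version of that step is sound.
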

In dimension $4$ we have the following stronger converse which is valid in a purely algebraic set up.
\begin{thm}\label{th1}
(1). The only algebraic curvature operator $R \in S^2_B(\wedge^2\mathbb{R}^4)$  which is a zero of $Q$ is the zero curvature operator. \\

(2). If $R \in S^2_B(\wedge^2\mathbb{R}^4)$ is an Einstein algebraic curvature operator (i.e., the Ricci tensor of $R$ is a scalar multiple of the identity of $\mathbb{R}^4$) which is a zero of $\widetilde{Q},$ then $R$ is the curvature operator of a (locally) symmetric space with unit scalar curvature.
\end{thm}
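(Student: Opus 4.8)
The plan is to exploit the special algebra of dimension four, where $\wedge^2\mathbb{R}^4$ splits orthogonally as $\wedge^2_+\oplus\wedge^2_-$ under the Hodge star, each summand being a $3$-dimensional subalgebra of $\mathfrak{so}(4)\cong\wedge^2\mathbb{R}^4$, and where the two summands commute. Writing $R\in S^2_B(\wedge^2\mathbb{R}^4)$ in the induced block form
\[ R=\begin{pmatrix} A & B\\ B^{t} & C\end{pmatrix},\]
the first Bianchi identity forces $\operatorname{tr}A=\operatorname{tr}C=s(R)/4$, the off-diagonal block $B$ is (a multiple of) the trace-free Ricci tensor, and the trace-free parts of $A,C$ are $W^{\pm}$. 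The first key step is to record how $\#$ interacts with this splitting: since $[\wedge^2_+,\wedge^2_-]=0$ and $[\wedge^2_\pm,\wedge^2_\pm]\subset\wedge^2_\pm$, the operator $R^{\#}$ is block diagonal whenever $R$ is, and on each factor $\#$ reduces to the sharp for the $3$-dimensional Lie algebra $\mathfrak{so}(3)$. Computing the structure constants of $\wedge^2_{\pm}$ in an orthonormal basis (they are $\sqrt2$ times the Levi-Civita symbol) and comparing with the three-dimensional situation, one finds $A^{\#}=2\operatorname{adj}(A)$ and $C^{\#}=2\operatorname{adj}(C)$, where $\operatorname{adj}$ is the classical adjugate. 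Thus for block-diagonal $R$,
\[ Q(R)=\operatorname{diag}\bigl(A^2+2\operatorname{adj}(A),\,C^2+2\operatorname{adj}(C)\bigr),\]
which reduces everything to two decoupled $3\times3$ problems.

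For part (1), suppose $Q(R)=0$. Taking scalar curvatures and using $s(Q(R))=\|Ric(R)\|^2$ gives $Ric(R)=0$, so in dimension four $R$ is trace-free and block diagonal, $R=\operatorname{diag}(W^+,W^-)$ with $\operatorname{tr}W^{\pm}=0$. It then suffices to show that a symmetric trace-free $3\times3$ matrix $W$ with $W^2+2\operatorname{adj}(W)=0$ vanishes. Diagonalizing $W$ with eigenvalues $w_1,w_2,w_3$ summing to zero, the equations read $w_i^2+2w_jw_k=0$; subtracting any two and using $w_i+w_j=-w_k$ yields $(w_i-w_j)w_k=0$ for every pair, and a short case check then forces $w_1=w_2=w_3=0$. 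Hence $W^+=W^-=0$ and $R=0$.

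For part (2), the Einstein hypothesis makes $B=0$, so $R=\operatorname{diag}(A,C)$, and since $R$ lies on $\mathbf{S}_1$ we have $s(R)=1$ and $\|Ric(R)\|^2=s(R)^2/4=1/4$. The zero condition $\widetilde Q(R)=0$ therefore decouples into
\[ A^2+2\operatorname{adj}(A)=\tfrac14A,\qquad C^2+2\operatorname{adj}(C)=\tfrac14C,\]
with $\operatorname{tr}A=\operatorname{tr}C=1/4$. Working in an eigenbasis of $A$ (eigenvalues $a_1,a_2,a_3$ with sum $1/4$), the equations become $a_i^2+2a_ja_k=\tfrac14a_i$, their pairwise differences give $(a_i-a_j)a_k=0$ as before, and solving the resulting cases shows that each block is, up to reordering of the basis, either $\tfrac1{12}I$ or $\operatorname{diag}(\tfrac14,0,0)$. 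Running the same analysis on $C$, the admissible curvature operators are exactly the diagonal combinations of these two block types. It remains to identify each combination with the curvature operator of a symmetric space of unit scalar curvature: $(\tfrac1{12}I,\tfrac1{12}I)$ is the round $S^4$, the two mixed pairs are $\mathbb{CP}^2$ with its two orientations (self-dual Einstein, $W^-=0$ or $W^+=0$), and $(\operatorname{diag}(\tfrac14,0,0),\operatorname{diag}(\tfrac14,0,0))$ is $S^2\times S^2$ with equal radii; all three are Einstein and locally symmetric, which is the assertion.

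I expect the main work to lie in the two computational inputs: pinning down the normalization $R^{\#}=2\operatorname{adj}$ on each Hodge factor, and the case analysis of the eigenvalue system. The latter is delicate precisely because the \emph{sum} of the three scalar equations is an identity in both parts (it reduces to $(\sum a_i)^2=\tfrac14\sum a_i$, respectively $(\sum w_i)^2=0$), so summing yields no information; the entire force of the argument rests on the pairwise relations $(a_i-a_j)a_k=0$ and the bookkeeping of which eigenvalues are thereby forced to vanish or to coincide.
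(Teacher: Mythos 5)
Your proposal is correct and follows essentially the same route as the paper: the Hodge-star splitting of $\wedge^2\mathbb{R}^4$, block-diagonalization of $R$, reduction of $R^{\#}$ to (a multiple of) the adjugate on each $3$-dimensional factor, and the same eigenvalue systems $w_i^2+cw_jw_k=0$ and $a_i^2+ca_ja_k=\tfrac14 a_i$ with the same case analysis and the same list of solutions. The only cosmetic difference is that you evaluate the structure constant explicitly ($c=2$), where the paper leaves it as an unspecified positive constant.
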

Our next result classifies the algebraic curvature operators in all dimensions, that are zeros of $\widetilde{Q}$ of Ricci type (i.e., with vanishing Weyl component). One notes that the curvature operator of any (locally) conformally flat manifold is of Ricci type.
\begin{thm}\label{th2}
For dimension $n+1,$ $n \geq3;$ if $R \in S^2_B(\wedge^2\mathbb{R}^{n+1})$ is an algebraic curvature operator of Ricci type which is not a multiple of the identity operator and $\widetilde{Q}(R) =0,$ then up to the action of the orthogonal group, $R$ is the curvature operator of {\small $S^n\times \mathbb{R}$} corresponding to the standard product metric normalized to unit scalar curvature.
\end{thm}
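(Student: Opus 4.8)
The plan is to turn the tensorial equation $\widetilde{Q}(R)=0$ into a finite system of scalar equations for the eigenvalues of the Ricci (equivalently Schouten) tensor, and then to solve that system. Write $m=n+1$ and let $e_1,\dots,e_m$ be an orthonormal basis of $\mathbb{R}^m$ diagonalizing $Ric(R)$, with eigenvalues $\rho_1,\dots,\rho_m$. Since $R$ is of Ricci type (vanishing Weyl part), $R$ is determined by its Ricci tensor and is diagonal in the induced basis $\{e_i\wedge e_j\}_{i<j}$ of $\wedge^2\mathbb{R}^m$, with eigenvalues $\lambda_{ij}=p_i+p_j$, where $p_i=\frac{1}{m-2}\big(\rho_i-\frac{s}{2(m-1)}\big)$ are the Schouten eigenvalues and $s=s(R)=1$ on $\mathbf{S}_1$.

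A standard computation with the bracket of $\mathfrak{so}(m)\cong\wedge^2\mathbb{R}^m$ shows that $R^2$ and $R^{\#}$ are again diagonal in the same basis, with eigenvalues $\lambda_{ij}^2$ and $\sum_{k\neq i,j}\lambda_{ik}\lambda_{jk}$ respectively; hence so is $\widetilde{Q}(R)$. Setting $c=\|Ric(R)\|^2=\sum_i\rho_i^2$, $\sigma_1=\sum_i p_i$, and $\sigma_2=\sum_i p_i^2$, the equation $\widetilde{Q}(R)=0$ becomes the scalar system, for all $i<j$,
\begin{equation*}
(m-2)p_ip_j-p_i^2-p_j^2+(p_i+p_j)(\sigma_1-c)+\sigma_2=0 .
\end{equation*}

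The structural heart of the argument is the next step. Subtracting the equation for the pair $(i,k)$ from that for $(i,j)$ yields the factorized relation
\begin{equation*}
(p_j-p_k)\big[(m-2)p_i-p_j-p_k+\sigma_1-c\big]=0
\end{equation*}
for every triple of distinct indices $i,j,k$. I would use this, together with the hypothesis $n\geq3$ (so that $m-2\geq 2$ and at least two indices lie outside any prescribed pair), to show that if $R$ is not a multiple of the identity then the $p_i$ take exactly two distinct values, the exceptional one with multiplicity one: after reordering, $p_1=\dots=p_n=a$ and $p_{n+1}=b$ with $a\neq b$. Indeed, if $p_j\neq p_k$ then the bracket must vanish for every $i\neq j,k$, forcing all such $p_i$ to coincide; a short case analysis on multiplicities then rules out three distinct values as well as the balanced two-value pattern. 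This combinatorial reduction is the step I expect to be the main obstacle, since it requires careful bookkeeping of multiplicities rather than a single calculation.

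Finally I would solve for $a$ and $b$. Imposing the scalar normalization in the form $\sigma_1=\frac{s}{2(m-1)}=\frac{1}{2n}$, substituting the two-value pattern into the difference relation gives $b=(m-3)a+\sigma_1-c$, which together with $na+b=\frac{1}{2n}$ forces $c=2(n-1)a$. Feeding this into the definition $c=\sum_i\rho_i^2$ via $\rho_i=(m-2)p_i+\sigma_1$, the system collapses to the single equation $[2n(n-1)a-1]^2=0$. Hence $a=\frac{1}{2n(n-1)}$ and $b=-a$, so that the sectional curvatures satisfy $\lambda_{ij}=2a>0$ on the $n$-dimensional subspace spanned by $e_1,\dots,e_n$ and $\lambda_{i,n+1}=a+b=0$ otherwise. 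This is precisely the curvature operator of $S^n\times\mathbb{R}$ with unit scalar curvature (a zero of $\widetilde{Q}$ by Proposition \ref{prop}), which finishes the proof up to the action of $O(n+1)$.
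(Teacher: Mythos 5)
Your proposal is correct, and it arrives at essentially the same scalar eigenvalue problem as the paper by a genuinely different and more self-contained route. The paper starts from the B\"ohm--Wilking component formula for $Q$ of a Ricci-type operator (Proposition \ref{propn}(4)), splits $\widetilde{Q}(R)=0$ along $\langle {\textbf{I}} \rangle \oplus \langle {\textbf{Ric}_0} \rangle \oplus \langle {\textbf{W}}\rangle$ into the pair of conditions $(Ric_0(R)\wedge Ric_0(R))_{\textbf{W}}=0$ and a quadratic matrix equation for $Ric_0(R)$, and then extracts the ``product of eigenvalue differences vanishes'' relations by pairing the Weyl condition against hand-picked elements $\xi_1,\xi_2,\xi_p$ of $\langle {\textbf{W}}\rangle$. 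You instead diagonalize $R$ directly in the basis $\{e_i\wedge e_j\}$ (legitimate, since $R=R_{\textbf{I}}+R_{\textbf{Ric}_0}$ is diagonal there once $Ric(R)$ is), use the explicit diagonal form of $R^2+R^{\#}$ to get one scalar equation per $2$-plane, and obtain the factorized relation $(p_j-p_k)\big[(m-2)p_i-p_j-p_k+\sigma_1-c\big]=0$ by subtracting equations; this single identity plays the role of all the paper's test-element computations and makes the reduction to the $(n,1)$ eigenvalue pattern cleaner. Your multiplicity analysis is sound (it needs $m=n+1\ge 4$, which is exactly the hypothesis $n\ge 3$, and it does rule out both three distinct values and the balanced two-value pattern). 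The closing arithmetic also checks out: $na+b=\sigma_1=\tfrac{1}{2n}$, $c=2(n-1)a$, and the collapse to $(2n(n-1)a-1)^2=0$ give $a=\tfrac{1}{2n(n-1)}$, $b=-a$, hence sectional curvature $\tfrac{1}{n(n-1)}$ on planes in the $S^n$ factor and $0$ on mixed planes, matching the normalization recorded in Proposition \ref{ric1}. What the paper's route buys is reuse of structural facts about $Q$ on the irreducible components that it needs elsewhere; what yours buys is independence from the B\"ohm--Wilking formula and a more transparent combinatorial core. One small presentational point: you should state explicitly that $s(R)=1$ (implicit in $\widetilde{Q}(R)=0$, since $\widetilde{Q}$ is defined on $\mathbf{S}_1$) before using $\sigma_1=\tfrac{1}{2n}$.
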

For dimensions greater than $4,$ the full set of zeros of $Q$ or all the Einstein zeros of $\widetilde{Q}$ are not known to us. Even in dimension $4$ it is not clear if there exists a zero of $\widetilde{Q},$ other than the curvature operator of {\small $S^2 \times \mathbb{R}^2,$} which is neither Einstein and nor of Ricci type.

Our main result is the following
\begin{thm}\label{main}
The reaction ODE \eqref{reaone} behaves unstably near the curvature operator of the Riemannian product {\small $M \times \mathbb{R}^k$} for $k \geq 0,$ where $M$ is an Einstein symmetric space with positive scalar curvature and not a spherical space form when $k=0.$
\end{thm}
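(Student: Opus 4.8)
The plan is to read ``behaves unstably near $R_0$'' as the assertion that the curvature operator $R_0$ of $M\times\mathbb{R}^k$ (normalized to $s(R_0)=1$), which is a zero of $\widetilde{Q}$, is \emph{not} a stable fixed point of the normalized field $\widetilde{Q}$ on $\mathbf{S}_1$. I would first reduce this to a linearized statement. Writing $m=\dim M$ and $\rho=\|Ric(R_0)\|^2$, the differential of $\widetilde{Q}$ at $R_0$ along a tangent vector $H\in T_{R_0}\mathbf{S}_1=\{H\colon s(H)=0\}$ is
\begin{equation*}
L(H)=DQ_{R_0}(H)-\rho\,H-2\langle Ric(R_0),Ric(H)\rangle\,R_0,\qquad DQ_{R_0}(H)=R_0H+HR_0+2\,R_0\#H .
\end{equation*}
Two structural facts drive the argument: $DQ_{R_0}$ is self-adjoint (because $Q$ is the gradient of the cubic $R\mapsto\frac13\langle Q(R),R\rangle$, using the total symmetry of $(R,S,T)\mapsto\langle R\#S,T\rangle$), and Euler's identity for the degree-two map $Q$ gives $DQ_{R_0}(R_0)=2Q(R_0)=2\rho\,R_0$. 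I would then show that $L$ has an eigenvalue with positive real part when $R_0$ is Einstein, and a repelling center direction otherwise.

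The engine is the spectral action of the linear, $O(n)$-equivariant, self-adjoint operator $X\mapsto I\#X$, where $I$ is the identity (constant curvature) operator on $\wedge^2\mathbb{R}^n$, on the orthogonal decomposition of curvature operators into scalar, traceless-Ricci and Weyl parts; a short computation in a diagonal basis gives the eigenvalues $n-2$, $\frac{n-4}{2}$ and $-1$ respectively. I would then take as candidate unstable direction the ``deformation towards constant curvature''
\begin{equation*}
H \;=\; I-n(n-1)\,R_0 ,
\end{equation*}
which satisfies $s(H)=0$ and is \emph{nonzero precisely when $R_0$ is not a constant-curvature operator}, i.e. when $M$ is not a spherical space form in the case $k=0$. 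Using $R_0\#I=I\#R_0$ with the three eigenvalues above one gets $DQ_{R_0}(I)=2R_0+2\,I\#R_0$. When $k=0$ the operator $R_0$ is Einstein, so its traceless-Ricci part vanishes and the formula collapses to $DQ_{R_0}(I)=\frac2n I$; combining this with $DQ_{R_0}(R_0)=2\rho R_0=\frac2n R_0$ and subtracting $\rho H$ (the rank-one term drops out because $Ric(R_0)$ is a multiple of the identity and $s(H)=0$) yields
\begin{equation*}
L(H)=\tfrac1n\,H ,
\end{equation*}
a genuine positive eigenvalue. This proves instability for every non-spherical Einstein symmetric $M$ with $k=0$, uniformly across the reducible and irreducible cases; and $H=0$ holds exactly for the round sphere, which is why that case must be excluded.

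When $k\geq 1$ the operator $R_0$ is no longer Einstein: its traceless-Ricci part is nonzero and the rank-one term $-2\langle Ric(R_0),Ric(\cdot)\rangle R_0$ no longer vanishes, and these corrections conspire to push the eigenvalue of $L$ along $H$ down to $0$, so $H$ now spans a \emph{center} direction. Here I would abandon the linear analysis and argue nonlinearly: restrict the normalized flow $\widetilde{Q}$ to the finite-dimensional $O(m)\times O(k)$-invariant subspace of $\mathbf{S}_1$ (spanned by the curvature of the $M$-factor and the identities on the flat and mixed blocks), reduce by the constraint $s=1$, and Taylor-expand the resulting low-dimensional flow about $R_0$. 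The model case $M^m\times\mathbb{R}$ is a single scalar ODE, and an explicit computation there gives $\dot t = t^2-(m+m^2)t^3$ along the ``curve up the flat factor'' direction; since the linear term vanishes and the quadratic coefficient is positive, $R_0$ is semistable, hence not Lyapunov stable, hence unstable.

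The main obstacle I anticipate is precisely the general $k\geq1$ step: verifying that the linear part of the reduced flow vanishes and that the quadratic term keeps a fixed (positive) sign in all dimensions and for all $M$. This requires controlling the sharp-products of the factor curvature with the flat-block identity; I would handle it using $Q(R_M)=c\,R_M$ for the symmetric factor with Einstein constant $c$, together with the vanishing of $\#$-products between commuting ideals, thereby reducing the sign question to a finite, block-by-block bookkeeping. The $k=0$ case, by contrast, I expect to be clean and complete as stated, since the single explicit eigenvector $H=I-n(n-1)R_0$ settles it in one stroke.
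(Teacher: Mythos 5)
Your $k=0$ argument is correct and complete, and it is essentially a linearized repackaging of the paper's Proposition \ref{thomas}. Indeed, since $R_0$ is Einstein with $s(R_0)=1$ one has $Q(R_0)=\tfrac1n R_0$ and $(R_0)_{\mathbf{I}}=\tfrac{1}{n(n-1)}\mathbf{I}$, so your test vector is $H=\mathbf{I}-n(n-1)R_0=-n(n-1)(R_0)_{\mathbf{W}}$; the identity $Q(\mathbf{I},W)=0$ for Weyl $W$ (Proposition \ref{propn}(2)) together with $Q((R_0)_{\mathbf{W}})=\tfrac1n (R_0)_{\mathbf{W}}$ gives $L(H)=\tfrac1n H$, exactly as you claim. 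The paper reaches the same conclusion along the same (Weyl) direction, but globally rather than infinitesimally: it writes down the explicit solution $f(t)\bigl(\alpha_0 R^0_{\mathbf{I}}+R^0_{\mathbf{W}}\bigr)$ with $f(t)=(1-\lambda t)^{-1}$ and shows the normalized curve escapes to $R^0_{\mathbf{W}}/\|R^0_{\mathbf{W}}\|$. Either packaging settles $k=0$.

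The genuine gap is the entire $k\geq1$ case. First, you conclude that ``the linear analysis fails'' from the behaviour of the single vector $H=\mathbf{I}-N(N-1)\breve R_0$ (with $N=n+k$), but for $k\geq1$ this $H$ is not an eigenvector of $L$ at all (its image acquires a nonzero $(\breve R_0)_{\mathbf{Ric}_0}$ component), and other directions remain honestly linearly unstable: when $M$ is not a sphere, the block direction $X=\widehat R_{\mathbf{W}}\oplus 0$ (Weyl part taken inside $S^2_B(\wedge^2\mathbb{R}^n)$, extended by zero) has $Ric(X)=0$ and, by the product structure (Corollary \ref{coro}), satisfies $2Q(\breve R_0,X)=\tfrac2n X$, hence $L(X)=\tfrac1n X>0$. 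This is precisely the content of the paper's Corollary \ref{cor}, and it makes your center-manifold detour unnecessary there. Second, the one case your scheme cannot reach this way is $M=S^n$ with $k\geq1$ (which the theorem does include, since the spherical exclusion applies only at $k=0$), and that is exactly where your proposal is only a sketch: the model ODE $\dot t=t^2-(m+m^2)t^3$ is asserted without derivation, the sign of the quadratic term for general $k$ is deferred to unperformed ``bookkeeping,'' and the invariance of your finite-dimensional slice is not verified. The paper closes this case by two independent routes: Proposition \ref{ric1} exhibits an explicit unit $\rho\in\langle\mathbf{Ric}_0\rangle$ with $\langle D\widetilde{Q}(\mathfrak{R})\rho,\rho\rangle>0$ for $S^n\times\mathbb{R}$ (so even here the linearization is not degenerate), and Proposition \ref{spaceform} restricts the ODE to the $O(n)\times O(k)$-invariant three-parameter family $(x,y,z)$ and shows, following Hamilton, that solutions starting at the curvature operator of $S^n\times\mathbb{R}^k$ drift to that of the round $S^{n+k}$. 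To complete your proof you must either carry out your $S^n\times\mathbb{R}^k$ computation in full or substitute one of these arguments.
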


The author would like to thank Harish Seshadri for suggesting this problem and his guidance. Thanks are also due to Thomas Richard for making some crucial observations related to this paper.

\section{Preliminaries}

Let {\small$\wedge^2\mathbb{R}^n$} denote the exterior $2$-product of {\small$\mathbb{R}^n.$}  The canonical inner product on {\small$\wedge^2\mathbb{R}^n$} induced by the standard inner product on {\small$\mathbb{R}^n$} is such that $\{e_i \wedge e_j\}_{i < j}$ is an orthonormal basis of {\small$\wedge^2\mathbb{R}^n$} whenever {\small$\{e_i\}^n_{i = 1}$} is an orthonormal basis of {\small$\mathbb{R}^n$}.
{\small$\wedge^2\mathbb{R}^n$} is identified with the Lie algebra $\mathfrak{so}(n)$ via the linear map {\small$x \wedge y : \mathbb{R}^n \rightarrow \mathbb{R}^n$} defined by
{\small$(x \wedge y) : z \mapsto \langle y,z\rangle x - \langle x,z\rangle y$}, for {\small$x,y,z \in \mathbb{R}^n$}. With this identification $\mathfrak{so}(n)$ has the inner product {\small $\langle{A},{B}\rangle = -\frac{1}{2}tr (AB).$}

The \textit{wedge product} {\small$A \wedge B$} of two linear endomorphisms {\small$A,B$} of {\small $\mathbb{R}^n$} is a linear endomorphism of {\small$\wedge^2\mathbb{R}^n$} defined by  {\small $$(A \wedge B)(v \wedge w ) = \frac{1}{2}({A(v) \wedge {B(w)}}+ {B(v) \wedge {A(w)}}).$$}

Denote the space of self-adjoint linear endomorphisms of {\small $\mathbb{R}^n$} and {\small $\wedge^2\mathbb{R}^n$} by {\small $S^2(\mathbb{R}^n)$} and {\small $S^2(\wedge^2\mathbb{R}^n)$} respectively. {\small$S^2_0(\mathbb{R}^n)$} denotes the space of traceless self-adjoint linear endomorphisms of {\small $\mathbb{R}^n$}. Following Hamilton ~\cite{H2} we define the sharp operator {\small$\#: S^2(\wedge^2\mathbb{R}^n) \times S^2(\wedge^2\mathbb{R}^n) \rightarrow S^2(\wedge^2\mathbb{R}^n),$} a bilinear map, by
{\small $$\langle (A \# B)(\phi),{\psi}\rangle = {\frac{1}{2}{\sum_{\alpha,\beta}{{\langle[A(\omega_{\alpha}),B(\omega_{\beta})],\phi\rangle}.{\langle[\omega_{\alpha},\omega_{\beta}],{\psi}\rangle}}}}$$}
\noindent
where {\small $A,B \in S^2(\wedge^2\mathbb{R}^n),$} {\small$\phi,\psi \in {\wedge^2\mathbb{R}^n}$} and
{\small$\{{\omega_{\alpha}}\}$} is an orthonormal basis of {\small$\wedge^2\mathbb{R}^n$}. It follows that {\small$A \# B = B \# A.$} Following Hamilton, we denote {\small $A \# A$ by $A^{\#}$}.

We define the bilinear operator {\small $Q : S^2(\wedge^2\mathbb{R}^n) \times S^2(\wedge^2\mathbb{R}^n) \rightarrow S^2(\wedge^2\mathbb{R}^n)$} by
$$Q(A,B) = \frac{1}{2}(AB+BA) +A \# B$$

and denote $Q(A,A)= A^2 + A^{\#}= Q(A)$.
Following Huisken ~\cite{Hu} we define the trilinear form  $\textit{tri}$ on {\small $S^2(\wedge^2\mathbb{R}^n)$} by
{\small $$\textit{tri}(A,B,C) = tr((AB+BA+2A\#B)\circ C)= 2tr(Q(A,B).C) = 2\langle Q(A,B), C \rangle .$$}

\noindent
Using the symmetry of $\#$, it can be checked that $\textit{tri}$ is symmetric in all three components.

\begin{defn}
The vector space of \textit{algebraic curvature operators} is defined as follows
$$S^2_{B}(\wedge^2\mathbb{R}^n) = \{R \in S^2(\wedge^2\mathbb{R}^n) \; : \mbox{ R satisfies the Bianchi identity}\}$$
\noindent
where {\small $R \in S^2(\wedge^2\mathbb{R}^n)$} is said to satisfy the Bianchi identity if
{\small $$\langle R(x \wedge y),z \wedge w\rangle + \langle R(y \wedge z),x \wedge w\rangle + \langle R(z \wedge x), y \wedge w\rangle =0, \ \ \forall \ \ x,y,z,w \in \mathbb{R}^n.$$}
\end{defn}
Let {\small$Ric(R): \mathbb{R}^n \rightarrow \mathbb{R}^n$} denote the \textit{Ricci tensor} of {\small$R \in S^2_{B}(\wedge^2\mathbb{R}^n)$} defined by
{\small $$Ric(R)_{ij} = \langle Ric(R)(e_i),e_j\rangle = \sum^n_{k = 1}{\langle R(e_i \wedge e_k),{e_j \wedge e_k} \rangle}$$}
\noindent
for an orthonormal basis {\small$\{e_i\}^n_{i=1}$} of {\small $\mathbb{R}^n$}.
Let {\small $Ric_0(R)$} denote the traceless part of {\small$Ric(R)$} and {\small$s(R)= tr(Ric(R))$} the \textit{scalar curvature} of $R.$

{\small$S^2_{B}(\wedge^2\mathbb{R}^n)$} has the following decomposition into three $O(n)$ invariant, irreducible and pairwise inequivalent subspaces
{\small $$S^2_{B}(\wedge^2\mathbb{R}^n) = \langle {\textbf{I}} \rangle \oplus \langle {\textbf{Ric}_0} \rangle \oplus \langle {\textbf{W}}\rangle$$}
where {\small $\langle {\textbf{I}} \rangle = \mathbb{R}(id \wedge id)$} denotes the scalar multiples of the identity operator on $\wedge^2\mathbb{R}^n$;
{\small $\langle {\textbf{Ric}_0} \rangle = id \wedge S^2_0(\mathbb{R}^n)$} denotes the space of algebraic curvature operators with traceless Ricci tensor  and
{\small $\langle{\textbf{W}}\rangle = \ker(Ric)$}  denotes the space of Weyl curvature operators i.e., algebraic curvature operators with vanishing Ricci tensor.

Then any algebraic curvature operator $R$ can be uniquely expressed as
{\small $$R = R_{\textbf{I}} + R_{\textbf{Ric}_0} + R_{\textbf{W}},$$}
where {\small$R_{\textbf{I}} , R_{\textbf{Ric}_0} , R_{\textbf{W}}$} denote the projections of $R$ onto corresponding irreducible components of {\small $S^2_B(\wedge^2\mathbb{R}^n)$}.
It follows that, {\small $$R_{\textbf{I}} = \frac{s(R)}{n(n-1)}id \wedge id \ \ {\rm and} \ \ R_{\textbf{Ric}_0} = \frac{2}{n-2}Ric_0(R) \wedge id.$$}
\begin{defn}\label{algebra}
We say that an algebraic curvature operator $R$ is

(a). Of \textit{Ricci type}, if {\small$R_{\textbf{W}} = 0;$}

(b). \textit{Einstein}, if {\small$R_{\textbf{Ric}_0} = 0$} i.e., if {\small $Ric(R)$} is a scalar multiple of {\small $id_{\mathbb{R}^n}.$} An Einstein curvature operator $R$ is called \textit{Ricci-flat}, if {\small$R = R_{\textbf{W}}.$}
\end{defn}

\section{The vector fields $Q$ and $\widetilde{Q}$}

It it easy to see that for {\small $R \in S^2_B(\wedge^2\mathbb{R}^n)$}, one has {\small$Q(R) \in S^2_B(\wedge^2\mathbb{R}^n)$} i.e., {\small$Q : R \mapsto R^2+R^{\#}$} defines a vector field on {\small$S^2_B(\wedge^2\mathbb{R}^n)$}.
Huisken ~\cite{Hu} observed that $Q$ is a gradient field i.e., $Q = \nabla{P}$ where $P$ is the smooth real valued function on {\small$S^2_{B}(\wedge^2\mathbb{R}^n)$} defined by
{\small $$P(R) = \frac{1}{6}{\textit{tri}(R,R,R)} = \frac{1}{3}tr(R^3 + RR^{\#})= \frac{1}{3}\langle Q(R), R \rangle.$$}
The Ricci tensor and scalar curvature of {\small $Q(R)$} are the following (see ~\cite{H1}, ~\cite{H2})
{\small \begin{equation}\label{scal}
\begin{cases}
Ric(Q(R))_{ij} = \sum_{k,l} Ric(R)_{kl}R_{ikjl}\\
s(Q(R)) = \|Ric(R)\|^2
\end{cases}
\end{equation}}
\noindent
where $\{e_i\}$ is an orthonormal basis of {\small $\mathbb{R}^n$} and {\small $R_{ikjl} = \langle R(e_i \wedge e_k) , (e_j \wedge e_l)\rangle$}. We will interchangeably use the notation $Q$ to denote the vector field and the bilinear operator mentioned earlier.

Using \eqref{scal} it follows that {\small $\widetilde{Q}$} defined earlier is a vector field on the hyperplane {\small $\mathbf{S}_1$}
(defined in Section 2), since {\small $s(\widetilde{Q}(R))=0$} when {\small $R \in \mathbf{S}_1$}. However it can be checked that the modified vector field {\small$\widetilde{Q}$}, is not a gradient field.

\begin{rem}\label{repara}
One observes that in order to infer the stable (respectively unstable) behavior of the original \eqref{reaone} near an algebraic curvature operator $R_0,$ it suffices to prove that $R_0$ is a stable (respectively unstable) zero of \eqref{reatwo}
as the modified ODE \eqref{reatwo} is obtained by normalizing the ODE \eqref{reaone} by its scalar curvature and reparametrizing it as follows.

Let $R(t): t \in [0,T]$ be a solution curve of the ODE \eqref{reaone} such that $s(R(t))>0,\ \  \forall t \in [0,T].$ Then one can normalize the curve $R(t)$ by its scalar curvature $s(t)=s(R(t))$ and the normalized curve $\bar{R}(t)= \frac{R(t)}{s(t)}$ satisfies the equation
\begin{equation}\label{scalone}
\frac{d\bar{R}}{dt} = s(R)\big(Q(\bar{R}) -\|Ric (\bar{R})\|^2 \bar{R}\big)
\end{equation}
Define $\varphi: [0,T] \rightarrow [0,S]$ by $$\varphi(t)= \int_0^t s(R(\mu))d\mu$$
which is an increasing function with $S= \varphi(T)$
Then $\widetilde{R}(\tau)= (\bar{R}\circ \varphi^{-1})(\tau), \tau \in (0,S]$ is a $1$-parameter family of curvature operators satisfying the equation \begin{equation*}
\begin{cases}
\frac{d\widetilde{R}}{d\tau} = Q(\widetilde{R}) -\|Ric (\widetilde{R})\|^2 \widetilde{R}\\
s(\widetilde{R})=1
\end{cases}
\end{equation*}
\end{rem}
Hence studying the stability (instability) of a curvature operator with respect to reaction ODE \eqref{reaone} is equivalent to studying the same with respect the modified ODE \eqref{reatwo}.\\

We now discuss some useful properties of $Q.$ First recall the definition of the restricted holonomy group {\small $Hol^0_p$} and the holonomy Lie algebra $\mathfrak{hol}_p$ at each point $p$ on a Riemannian manifold $(M,g)$.
Also recall the following classical proposition.
\begin{prop}
Given $v,w \in T_pM,$ the curvature transformation {\small $R(v,w): T_pM \rightarrow T_pM$} defined by $$R(v,w)(x)= R(v,w)x \ \ \forall \ \ x\in T_pM $$
is a skew symmetric linear operator where $R(v,w)x$ is the $(1,3)$ curvature tensor of $M.$ Then for each pair $v,w \in T_pM,$ the image of $R(v,w)$ lies in $\mathfrak{hol}_p.$
If $(M,g)$ is locally symmetric, then $\mathfrak{hol}_p$ is generated by the curvature transformations $R(v,w)$ as above.
\end{prop}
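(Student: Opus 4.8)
The plan is to establish the two assertions in turn: the skew-symmetry and the containment $\mathrm{Im}\,R(v,w)\subseteq\mathfrak{hol}_p$ follow from the algebraic symmetries of the curvature tensor together with the Ambrose--Singer holonomy theorem, while the sharper statement in the locally symmetric case follows by feeding the condition $\nabla R=0$ into the Ambrose--Singer description.

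First I would verify skew-symmetry. Writing $\langle R(v,w)x,y\rangle = R(v,w,x,y)$ for the associated $(0,4)$ curvature tensor, the standard antisymmetry of $R$ in its last two arguments gives $R(v,w,x,y)=-R(v,w,y,x)$, hence $\langle R(v,w)x,y\rangle = -\langle x,R(v,w)y\rangle$ for all $x,y\in T_pM$. Thus $R(v,w)$ is skew-adjoint, i.e. $R(v,w)\in\mathfrak{so}(T_pM)$, and in particular it is a well-defined element of the ambient Lie algebra in which $\mathfrak{hol}_p$ sits.

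Next, to see that the image of $R(v,w)$ lies in $\mathfrak{hol}_p$, I would invoke the Ambrose--Singer theorem: the holonomy algebra $\mathfrak{hol}_p$ is the subalgebra of $\mathfrak{so}(T_pM)$ spanned by all operators of the form $\tau_\gamma^{-1}\circ R(v,w)\circ\tau_\gamma$, where $\gamma$ is a piecewise-smooth path from $p$ to a point $q$, $\tau_\gamma\colon T_pM\to T_qM$ denotes parallel transport along $\gamma$, and $v,w\in T_qM$. Taking $\gamma$ to be the constant path at $p$ shows in particular that $R(v,w)\in\mathfrak{hol}_p$ for every $v,w\in T_pM$. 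Finally, for the locally symmetric case I would use that $R$ is parallel: since $\tau_\gamma$ is a linear isometry intertwining the (parallel) curvature tensor, one has $\tau_\gamma^{-1}\circ R(v,w)\circ\tau_\gamma = R(\tau_\gamma^{-1}v,\tau_\gamma^{-1}w)$ for $v,w\in T_qM$, so that every generator appearing in the Ambrose--Singer description is already a curvature operator $R(\cdot,\cdot)$ based at $p$. Hence $\mathfrak{hol}_p$ is generated by $\{R(v,w):v,w\in T_pM\}$.

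The main obstacle is the Ambrose--Singer theorem itself, which carries the real geometric content; I would quote it as a classical result rather than reprove it. Granting it, the skew-symmetry is a routine consequence of the curvature symmetries, and the reduction in the symmetric case is immediate from $\nabla R=0$, since parallel transport then maps curvature operators at $q$ to curvature operators at $p$.
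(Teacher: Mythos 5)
The paper offers no proof of this proposition at all: it is introduced with ``recall the following classical proposition'' and used as a black box to motivate the algebraic definition of $\mathfrak{hol}(R)$, so there is nothing to compare your argument against line by line. On its own merits your proof is correct and is the standard one. Two minor remarks. First, the phrase ``the image of $R(v,w)$ lies in $\mathfrak{hol}_p$'' does not typecheck literally (the image of $R(v,w)$ is a subspace of $T_pM$, while $\mathfrak{hol}_p\subset\mathfrak{so}(T_pM)$); your reading, namely that $R(v,w)$ itself is an element of $\mathfrak{hol}_p$, is the intended one and is consistent with the paper's subsequent definition of $\mathfrak{hol}(R)$ as the smallest Lie subalgebra containing the image of the curvature \emph{operator} on $\wedge^2$. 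Second, the antisymmetry $R(v,w,x,y)=-R(v,w,y,x)$ that you invoke for skew-adjointness is itself a consequence of metric compatibility of the Levi--Civita connection rather than a definitional symmetry, so a one-line acknowledgment of that would make the first step airtight. The Ambrose--Singer step, the specialization to the constant path, and the observation that $\nabla R=0$ forces $\tau_\gamma^{-1}\circ R(v,w)\circ\tau_\gamma=R(\tau_\gamma^{-1}v,\tau_\gamma^{-1}w)$ (so that in the locally symmetric case the holonomy algebra is already \emph{spanned}, not merely generated, by the curvature transformations at $p$) are all correct.
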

In light of the above proposition, we define the algebraic analogue of the holonomy algebra as follows. This definition suffices for our purpose as we mainly consider curvature operators of (locally) symmetric spaces.
\begin{defn}
The \textit{holonomy algebra} of an algebraic curvature operator {\small $R \in S^2_B(\wedge^2\mathbb{R}^n),$} denoted by $\mathfrak{hol}(R)$, is defined as the smallest Lie subalgebra of $\mathfrak{so}(n)$ containing the image of $R.$
\end{defn}
Recall that if $(M,g)$ is a Riemannian manifold such that at each $p \in M,$ one has {\small $Hol^0_p= G$} and if $g(t)$ be the solution of the Ricci flow \eqref{flow} starting at $g,$ then {\small $Hol^0_p(g(t)) \subset G$} for all time $t >0$ ~\cite{H2}. We prove the ODE version of this fact as follows:
\begin{prop}\label{pro}
Let $R$ be an algebraic curvature operator. Then the vector field $Q$ preserves holonomy i.e., {\small $\mathfrak{hol}(Q(R)) \subset \mathfrak{hol}(R).$}
\end{prop}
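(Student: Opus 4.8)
The plan is to reduce the claim to a statement about images. By definition $\mathfrak{hol}(R)$ is a Lie subalgebra of $\mathfrak{so}(n)$, while $\mathfrak{hol}(Q(R))$ is the \emph{smallest} Lie subalgebra containing the image of $Q(R)$. Hence it suffices to verify that $\mathrm{Im}(Q(R)) \subset \mathfrak{hol}(R)$; minimality of $\mathfrak{hol}(Q(R))$ then yields the desired inclusion. Writing $Q(R) = R^2 + R^{\#}$, I would handle the two summands separately.

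The quadratic term is immediate, since $\mathrm{Im}(R^2) \subset \mathrm{Im}(R) \subset \mathfrak{hol}(R)$. The substance of the proof lies in the $\#$-term. Set $\mathfrak{g} := \mathfrak{hol}(R)$ and split $\wedge^2\mathbb{R}^n = \mathfrak{g} \oplus \mathfrak{g}^{\perp}$ orthogonally. As $\mathrm{Im}(R) \subset \mathfrak{g}$ and $R$ is self-adjoint, its kernel $(\mathrm{Im}\,R)^{\perp}$ contains $\mathfrak{g}^{\perp}$, so $R$ annihilates $\mathfrak{g}^{\perp}$. Choosing an orthonormal basis $\{\omega_\alpha\}$ of $\wedge^2\mathbb{R}^n$ adapted to this splitting, each $\omega_\alpha$ lies either in $\mathfrak{g}$ or in $\mathfrak{g}^{\perp}$, and $R(\omega_\alpha) = 0$ in the latter case.

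Next I would unwind the definition of $\#$. Reading off the $\psi$-components in the defining formula for $\langle R^{\#}(\phi), \psi\rangle$ gives, for every $\phi \in \wedge^2\mathbb{R}^n$,
$$R^{\#}(\phi) = \frac{1}{2}\sum_{\alpha,\beta}\langle [R(\omega_\alpha),R(\omega_\beta)],\phi\rangle\,[\omega_\alpha,\omega_\beta].$$
Since $R$ kills every basis vector in $\mathfrak{g}^{\perp}$, only the terms with both $\omega_\alpha, \omega_\beta \in \mathfrak{g}$ contribute. For such terms $[\omega_\alpha, \omega_\beta] \in \mathfrak{g}$, because $\mathfrak{g}$ is closed under the bracket. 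Thus $R^{\#}(\phi)$ is a linear combination of elements of $\mathfrak{g}$, giving $\mathrm{Im}(R^{\#}) \subset \mathfrak{hol}(R)$, and combining with the $R^2$ term completes the argument.

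I expect the main obstacle to be the passage from the bilinear defining formula for $\#$ to the closed expression for $R^{\#}(\phi)$ as a sum of brackets of basis elements, together with the observation that self-adjointness forces $R|_{\mathfrak{g}^{\perp}} = 0$. Once these are established, closure of $\mathfrak{g}$ under the Lie bracket finishes the proof with no further computation.
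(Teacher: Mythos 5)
Your proposal is correct and follows essentially the same route as the paper: both reduce to showing $\mathrm{Im}(Q(R))\subset\mathfrak{hol}(R)$ using an orthonormal basis adapted to the splitting $\mathfrak{so}(n)=\mathfrak{hol}(R)\oplus\mathfrak{hol}(R)^{\perp}$, the fact that self-adjointness forces $R$ to annihilate $\mathfrak{hol}(R)^{\perp}$, and closure of $\mathfrak{hol}(R)$ under the Lie bracket; the paper merely phrases the computation dually, by pairing $Q(R)(\rho)$ against $\eta\in\mathfrak{hol}(R)^{\perp}$ and showing the pairing vanishes. Your explicit separate treatment of the $R^2$ term is a point the paper leaves implicit, but it is not a substantive difference.
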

\begin{proof}
It suffices to show that {\small $Q(R)(\rho) \in \mathfrak{hol}(R)$} for any {\small $\rho \in \mathfrak{so}(n).$}
Let {\small $\{\xi_j\}_{j=1}^N$} be an orthonormal basis of $\mathfrak{so}(n)$ such that {\small $\{\xi_j\}_{j=1}^m$} spans $\mathfrak{hol}(R).$ Here $N$ and $m$ denote the respective dimensions of $\mathfrak{so}(n)$ and $\mathfrak{hol}(R)$. Given any {\small$\rho \in \mathfrak{so}(n)$} and {\small$\eta \in\mathfrak{hol}(R)^{\bot} \subset \mathfrak{so}(n)$} one notes that
{\small \begin{align*}
\langle Q(R)(\rho), \eta \rangle &= \langle Q(R)(\eta), \rho \rangle\\
&=\frac{1}{2}\sum_{i,j \leq N} \langle [R(\xi_i), R(\xi_j)], \eta \rangle \langle [\xi_i, \xi_j],\rho \rangle\\
&=\frac{1}{2}\sum_{i,j,k,l \leq N} \langle R(\xi_i),\xi_k \rangle \langle R(\xi_j),\xi_l \rangle\langle [\xi_k, \xi_l],\eta \rangle \langle [\xi_i, \xi_j],\rho \rangle\\
&= 0
\end{align*}}
as all surviving terms must have {\small $k,l \leq m$} and then {\small $\xi_k, [\xi_k, \xi_l] \in \mathfrak{hol}(R)$} which gives
{\small $\langle [\xi_k, \xi_l],\eta \rangle =0.$}  This shows that {\small $Q(R)(\rho) \in \mathfrak{hol}(R).$}
\end{proof}

We now focus on a special class of curvature operators:

\begin{defn}
{\small $R \in S^2_B(\wedge^2\mathbb{R}^{n})$} is said to be a \textit{product curvature operator} if there exist positive integers {\small$n_1, n_2$} with {\small $n=n_1+n_2$} and curvature operators
{\small $R_i \in S^2_B(\wedge^2\mathbb{R}^{n_i})$}, $i=1,2$ such that {\small$R= R_1+R_2$} i.e., {\small$R(v \wedge w) = R_i(v \wedge w)$} if {\small$v,w \in \mathbb{R}^{n_i}$} and {\small$R(v \wedge w) =0$} if {\small$v \in \mathbb{R}^{n_1}$}, {\small$w \in \mathbb{R}^{n_2}$}.
\end{defn}

We then have the following simple proposition:
\begin{prop}\label{new}
If {\small $R \in S^2_B(\wedge^2\mathbb{R}^{n_1 + n_2})$} is a product curvature operator then {\small $$\mathfrak{hol}(R) \in \mathfrak{so}(n_1) \oplus \mathfrak{so}(n_2).$$}
\end{prop}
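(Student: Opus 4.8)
The plan is to exploit the block decomposition of $\mathfrak{so}(n)$ induced by the splitting $\mathbb{R}^n = \mathbb{R}^{n_1}\oplus\mathbb{R}^{n_2}$ and then to invoke the minimality built into the definition of $\mathfrak{hol}(R)$. First I would fix the orthogonal decomposition $\mathbb{R}^{n_1+n_2}=\mathbb{R}^{n_1}\oplus\mathbb{R}^{n_2}$ and record the corresponding decomposition of the ambient space of bivectors,
$$\wedge^2\mathbb{R}^{n_1+n_2}=\wedge^2\mathbb{R}^{n_1}\ \oplus\ (\mathbb{R}^{n_1}\wedge\mathbb{R}^{n_2})\ \oplus\ \wedge^2\mathbb{R}^{n_2},$$
where $\mathbb{R}^{n_1}\wedge\mathbb{R}^{n_2}$ denotes the span of the mixed bivectors $e_i\wedge e_j$ with $e_i\in\mathbb{R}^{n_1}$, $e_j\in\mathbb{R}^{n_2}$. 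Under the identification $\wedge^2\mathbb{R}^n\cong\mathfrak{so}(n)$ used throughout, I would check that the first and third summands correspond precisely to the block-diagonal subalgebras $\mathfrak{so}(n_1)$ and $\mathfrak{so}(n_2)$ (each extended by zero on the complementary factor), while the middle summand corresponds to the off-diagonal skew-symmetric matrices. Concretely, if both indices of $e_i\wedge e_j$ are $\le n_1$, the operator $e_i\wedge e_j$ annihilates $\mathbb{R}^{n_2}$ and so sits inside the top-left block; this bookkeeping is the one genuinely careful point of the argument.

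Next I would show that $\mathfrak{g}:=\mathfrak{so}(n_1)\oplus\mathfrak{so}(n_2)$, realized as the block-diagonal skew-symmetric matrices, is a Lie subalgebra of $\mathfrak{so}(n_1+n_2)$. This is immediate: each factor is closed under its own bracket, and if $A$ is supported on $\mathbb{R}^{n_1}$ and $B$ on $\mathbb{R}^{n_2}$, then $AB=BA=0$ because $A$ annihilates $\mathbb{R}^{n_2}$ and $B$ annihilates $\mathbb{R}^{n_1}$, whence $[A,B]=0$. Thus $[\mathfrak{g},\mathfrak{g}]\subseteq\mathfrak{g}$.

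The core step is to locate the image of $R$. From the definition of a product curvature operator, $R$ annihilates every mixed bivector $v\wedge w$ with $v\in\mathbb{R}^{n_1}$, $w\in\mathbb{R}^{n_2}$, hence vanishes on the entire middle summand $\mathbb{R}^{n_1}\wedge\mathbb{R}^{n_2}$; moreover $R$ agrees with $R_i$ on $\wedge^2\mathbb{R}^{n_i}$, so it carries $\wedge^2\mathbb{R}^{n_i}$ into $\mathrm{im}(R_i)\subseteq\wedge^2\mathbb{R}^{n_i}$. By linearity the image of $R$ therefore lies in $\wedge^2\mathbb{R}^{n_1}\oplus\wedge^2\mathbb{R}^{n_2}$, which is exactly $\mathfrak{g}$ under the above identification.

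Finally I would combine these observations. Since $\mathfrak{g}$ is a Lie subalgebra of $\mathfrak{so}(n)$ containing $\mathrm{im}(R)$, and $\mathfrak{hol}(R)$ is by definition the smallest Lie subalgebra containing $\mathrm{im}(R)$, minimality forces $\mathfrak{hol}(R)\subseteq\mathfrak{g}=\mathfrak{so}(n_1)\oplus\mathfrak{so}(n_2)$, as claimed. The proposition is elementary and contains no real obstruction; the only place demanding care is keeping the identification $\wedge^2\mathbb{R}^n\cong\mathfrak{so}(n)$ consistent, so that the vanishing of $R$ on mixed bivectors translates correctly into the image being block-diagonal.
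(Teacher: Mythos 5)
Your argument is correct and is precisely the unpacking of what the paper dismisses as ``immediate from the definition of a product curvature operator'': the image of $R$ lies in $\wedge^2\mathbb{R}^{n_1}\oplus\wedge^2\mathbb{R}^{n_2}$, which is the block-diagonal Lie subalgebra $\mathfrak{so}(n_1)\oplus\mathfrak{so}(n_2)$, and minimality of $\mathfrak{hol}(R)$ does the rest. No gaps; this is the same (and only natural) route.
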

The proof is immediate from the definition of a product curvature operator.
An easy consequence of the above propositions is the following
\begin{cor}\label{coro}
$Q$ preserves the product structure i.e., if {\small$R =R_1 + R_2 \in S^2_B(\wedge^2\mathbb{R}^{n_1+n_2})$} is a product curvature operator, then {\small$Q(R)=Q(R_1) + Q(R_2)$} is also a product of $Q(R_1)$ and $Q(R_2)$ as defined above.
\end{cor}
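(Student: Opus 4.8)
The plan is to prove Corollary~\ref{coro} by combining Proposition~\ref{pro} (holonomy preservation) with Proposition~\ref{new} (the holonomy of a product curvature operator is block-diagonal) and the definition of the sharp operator. The statement to prove is that if $R = R_1 + R_2$ is a product curvature operator, then $Q(R) = Q(R_1) + Q(R_2)$, where each $Q(R_i)$ is computed intrinsically in $S^2_B(\wedge^2\mathbb{R}^{n_i})$ and the right-hand side is again a product curvature operator. First I would fix the orthogonal decomposition $\mathbb{R}^{n_1+n_2} = \mathbb{R}^{n_1} \oplus \mathbb{R}^{n_2}$, which induces the splitting $\wedge^2\mathbb{R}^{n_1+n_2} = \wedge^2\mathbb{R}^{n_1} \oplus (\mathbb{R}^{n_1}\wedge\mathbb{R}^{n_2}) \oplus \wedge^2\mathbb{R}^{n_2}$, and identify $\mathfrak{so}(n_1)\oplus\mathfrak{so}(n_2)$ with $\wedge^2\mathbb{R}^{n_1}\oplus\wedge^2\mathbb{R}^{n_2}$ inside $\mathfrak{so}(n_1+n_2)$.

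The square term is the easy half: since $R = R_1 + R_2$ acts blockwise, vanishing on the mixed space $\mathbb{R}^{n_1}\wedge\mathbb{R}^{n_2}$ and acting as $R_i$ on $\wedge^2\mathbb{R}^{n_i}$, the composition $R^2$ decomposes as $R_1^2 + R_2^2$ with no cross terms, again vanishing on the mixed part. For the sharp term I would use the defining formula $\langle (R\#R)(\phi),\psi\rangle = \frac{1}{2}\sum_{\alpha,\beta}\langle[R(\omega_\alpha),R(\omega_\beta)],\phi\rangle\langle[\omega_\alpha,\omega_\beta],\psi\rangle$, choosing the orthonormal basis $\{\omega_\alpha\}$ of $\wedge^2\mathbb{R}^{n_1+n_2}$ adapted to the three-way splitting above. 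The key structural input is that by Proposition~\ref{new} the image of $R$ lies in $\mathfrak{so}(n_1)\oplus\mathfrak{so}(n_2)$, so whenever $\omega_\alpha$ is a mixed basis vector we have $R(\omega_\alpha)=0$, killing those terms. For the surviving basis vectors $\omega_\alpha,\omega_\beta$ lying in $\wedge^2\mathbb{R}^{n_i}$, the Lie bracket $[R(\omega_\alpha),R(\omega_\beta)]$ stays in $\mathfrak{so}(n_i)$ because $\mathfrak{so}(n_1)\oplus\mathfrak{so}(n_2)$ is a Lie subalgebra with the two factors commuting; and $[\omega_\alpha,\omega_\beta]$ likewise stays in the same block. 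Hence only pairs with $\alpha,\beta$ in the same factor contribute, which forces a clean split $R^\# = R_1^\# + R_2^\#$ with the sharp squares computed intrinsically in each $\wedge^2\mathbb{R}^{n_i}$, and with the mixed component of $R^\#$ vanishing.

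The main obstacle is bookkeeping rather than conceptual: one must verify that the sharp operator computed intrinsically inside $S^2(\wedge^2\mathbb{R}^{n_i})$ agrees with the restriction of the ambient sharp operation to the $\wedge^2\mathbb{R}^{n_i}$ block, i.e.\ that no normalization discrepancy arises from summing over the larger basis. I would address this by noting that the only basis vectors producing nonzero $R(\omega_\alpha)$ are precisely those of $\wedge^2\mathbb{R}^{n_1}$ and $\wedge^2\mathbb{R}^{n_2}$, and that the cross-factor pairs vanish because the two ideals commute and are orthogonal; thus the double sum collapses exactly into the two intrinsic sums defining $R_1^\#$ and $R_2^\#$, and no mixed output terms survive since $[\mathfrak{so}(n_1),\mathfrak{so}(n_2)]=0$ prevents any contribution pairing with $\psi$ in the mixed space. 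Finally I would record that the resulting $Q(R)=Q(R_1)+Q(R_2)$ vanishes on $\mathbb{R}^{n_1}\wedge\mathbb{R}^{n_2}$ and acts as $Q(R_i)$ on each $\wedge^2\mathbb{R}^{n_i}$, which is exactly the assertion that $Q(R)$ is the product of $Q(R_1)$ and $Q(R_2)$.
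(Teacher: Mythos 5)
Your proof is correct. The paper itself offers no written argument for Corollary \ref{coro}: it states only that the corollary is ``an easy consequence'' of Propositions \ref{pro} and \ref{new}, the intended route being that $\mathfrak{hol}(Q(R)) \subset \mathfrak{hol}(R) \subset \mathfrak{so}(n_1)\oplus\mathfrak{so}(n_2)$, so that the image of the self-adjoint operator $Q(R)$ avoids the mixed space $\mathbb{R}^{n_1}\wedge\mathbb{R}^{n_2}$. You announce that route but then actually carry out a direct computation from the definitions, which is a genuinely different --- and in fact more complete --- argument: the holonomy containment by itself only shows that $Q(R)$ annihilates the mixed space and preserves $\wedge^2\mathbb{R}^{n_1}\oplus\wedge^2\mathbb{R}^{n_2}$, and one must still check that the diagonal blocks coincide with the intrinsically computed $Q(R_1)$ and $Q(R_2)$ and that no block maps $\wedge^2\mathbb{R}^{n_1}$ into $\wedge^2\mathbb{R}^{n_2}$. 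Your term-by-term analysis of the double sum defining $\#$ settles all of this: terms with a mixed basis vector die because $R$ vanishes there, cross-factor pairs die because $[\mathfrak{so}(n_1),\mathfrak{so}(n_2)]=0$, and same-factor pairs collapse exactly to the intrinsic sums defining $R_1^{\#}$ and $R_2^{\#}$ (with the output confined to the corresponding block, since $[\omega_\alpha,\omega_\beta]$ stays in that factor); the square term is handled correctly by the blockwise action and orthogonality of the images of $R_1$ and $R_2$. The only mild redundancy is that, having done the explicit computation, you no longer need Proposition \ref{pro} at all; conversely, what your write-up supplies is precisely the bookkeeping the paper suppresses.
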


The following Proposition describes few more well known and imprortant properties of $Q$.
\begin{prop}\label{propn}
(1). If {\small $R \in S^2_B(\wedge^2\mathbb{R}^{n})$} is an Einstein curvature operator, then so is $Q(R).$ Moreover, If $R$ is Ricci-flat, then so is $Q(R).$\\

(2). Given {\small $R \in S^2_B(\wedge^2\mathbb{R}^n),$} one has
{\small $$Q(R,\mathbf{I})= R + R \# \mathbf{I} =(n-1)R_{\textbf{I}} + \frac{(n-2)}{2}R_{\textbf{Ric}_0}.$$}
In particular, {\small $Q(\mathbf{I},W) =0$} if {\small $W \in \langle {\textbf{W}}\rangle.$}\\

(3).  For {\small $R \in \langle {\textbf{Ric}_0} \rangle$} and {\small $S,W \in \langle {\textbf{W}}\rangle;$} one has
{\small $$\textit{tri}(R,S,W) = 0 = \textit{tri}(S,R,\textbf{I}) \ \ {\rm and} \ Q(S,R) \in \langle {\textbf{Ric}_0} \rangle.$$}

(4). If {\small $R \in  S^2_B(\wedge^2\mathbb{R}^n)$} is a curvature operator of Ricci type, then one has
{\small \begin{align}
\nonumber Q(R) =& \frac{1}{n-2}Ric_0(R)\wedge Ric_0(R) + \frac{2s(R)}{n(n-1)}Ric_0(R)\wedge id_{\mathbb{R}^n}\\
 & -\frac{2}{(n-2)^2}\big(Ric_0(R)^2\big)_0 \wedge id_{\mathbb{R}^n} + \big(\frac{s(R)^2}{n^2(n-1)}+ \frac{\|Ric_0(R)\|^2}{n(n-2)} \big)\mathbf{I}
\end{align}}
In particular, if {\small $R \in \langle {\textbf{Ric}_0} \rangle,$} then
{\small \begin{equation}
Q(R) = \frac{1}{n-2}Ric_0(R)\wedge Ric_0(R)_{\mathbf{W}} -\frac{4}{(n-2)^2}\big(Ric_0(R)^2\big)_0 \wedge id_{\mathbb{R}^n} + \frac{\|Ric_0(R)\|^2}{n(n-1)} \mathbf{I}
\end{equation}}
\end{prop}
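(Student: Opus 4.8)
\emph{Overview.} The four assertions are of different character, so I would prove them in turn, using the contraction formulas \eqref{scal} and the irreducible decomposition $S^2_B(\wedge^2\mathbb{R}^n)=\langle\textbf{I}\rangle\oplus\langle\textbf{Ric}_0\rangle\oplus\langle\textbf{W}\rangle$ as the main tools, and deriving the later parts from the earlier ones.

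\emph{Parts (1) and (2).} Part (1) falls straight out of \eqref{scal}. If $R$ is Einstein, then $Ric(R)=\frac{s(R)}{n}id$, so $Ric(Q(R))_{ij}=\frac{s(R)}{n}\sum_k R_{ikjk}=\frac{s(R)}{n}Ric(R)_{ij}=\frac{s(R)^2}{n^2}\delta_{ij}$, whence $Q(R)$ is again Einstein; if moreover $Ric(R)=0$ the same formula gives $Ric(Q(R))=0$, so $Q(R)$ is Ricci-flat. For part (2) the first equality $Q(R,\mathbf{I})=R+R\#\mathbf{I}$ is immediate since $\mathbf{I}=id\wedge id$ is the identity of $\wedge^2\mathbb{R}^n$. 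For the second, I would note that $R\mapsto R\#\mathbf{I}=Q(R,\mathbf{I})-R$ is a linear $O(n)$-equivariant endomorphism of $S^2_B(\wedge^2\mathbb{R}^n)$, because $\#$ is $O(n)$-equivariant and $\mathbf{I}$ is $O(n)$-fixed. As $\langle\textbf{I}\rangle,\langle\textbf{Ric}_0\rangle,\langle\textbf{W}\rangle$ are irreducible, pairwise inequivalent and invariant, Schur's lemma forces this map to act by a scalar on each summand. The three scalars are then pinned down on one representative each: a short bracket computation in $\mathfrak{so}(n)$ (where the Killing form enters) gives $\mathbf{I}\#\mathbf{I}=(n-2)\mathbf{I}$, and testing on $A\wedge id$ with $A$ traceless and on a Weyl operator gives the scalars $\frac{n-4}{2}$ on $\langle\textbf{Ric}_0\rangle$ and $-1$ on $\langle\textbf{W}\rangle$. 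Adding $R$ produces the coefficients $(n-1)$ and $\frac{n-2}{2}$, and the vanishing of the $\langle\textbf{W}\rangle$-coefficient is exactly $Q(\mathbf{I},W)=0$.

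\emph{Part (3).} The key observation is that $Q$ maps $\langle\textbf{W}\rangle\times\langle\textbf{W}\rangle$ into Ricci-flat operators. Polarizing \eqref{scal} (legitimate since $Q$ is symmetric bilinear) yields $Ric(Q(S,W))_{ij}=\frac12\sum_{k,l}(Ric(S)_{kl}W_{ikjl}+Ric(W)_{kl}S_{ikjl})$, which vanishes when $S,W\in\langle\textbf{W}\rangle=\ker(Ric)$; hence $Q(S,W)\in\langle\textbf{W}\rangle$. Consequently, for $R\in\langle\textbf{Ric}_0\rangle$ and $S,W\in\langle\textbf{W}\rangle$, symmetry of $\textit{tri}$ gives $\textit{tri}(R,S,W)=2\langle Q(S,W),R\rangle=0$ by orthogonality of $\langle\textbf{W}\rangle$ and $\langle\textbf{Ric}_0\rangle$. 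Using part (2) with $R\in\langle\textbf{Ric}_0\rangle$, so that $Q(R,\mathbf{I})=\frac{n-2}{2}R$, one gets $\textit{tri}(S,R,\mathbf{I})=2\langle Q(R,\mathbf{I}),S\rangle=(n-2)\langle R,S\rangle=0$. Finally, to see $Q(S,R)\in\langle\textbf{Ric}_0\rangle$ I would test it against the other two summands: $\langle Q(S,R),\mathbf{I}\rangle=\frac12\textit{tri}(S,R,\mathbf{I})=0$ kills the $\langle\textbf{I}\rangle$-part, while $\langle Q(S,R),W'\rangle=\frac12\textit{tri}(R,S,W')=0$ for every $W'\in\langle\textbf{W}\rangle$ kills the $\langle\textbf{W}\rangle$-part.

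\emph{Part (4) and the main obstacle.} For a Ricci-type operator I would write $R=\frac{2}{n-2}A\wedge id+\frac{s(R)}{n(n-1)}\mathbf{I}$ with $A=Ric_0(R)$ traceless, and expand $Q(R)=R^2+R^{\#}$ bilinearly into the six terms built from $A\wedge id$ and $\mathbf{I}$. The building blocks are: the composition formula $(A\wedge id)^2=\frac12 A^2\wedge id+\frac12 A\wedge A$, read off from the definition of the wedge product; the sharp identity for operators $A\wedge id$, expressing $(A\wedge id)\#(B\wedge id)$ as a universal $O(n)$-equivariant combination of $A\wedge B$, of $((tr A)B+(tr B)A-AB-BA)\wedge id$, and of a scalar multiple of $\mathbf{I}$; the special sharps $(A\wedge id)\#\mathbf{I}=\frac{n-4}{2}(A\wedge id)$ and $\mathbf{I}\#\mathbf{I}=(n-2)\mathbf{I}$ from part (2); and the contractions $Ric(A\wedge A)=(tr A)A-A^2$ and $s(A\wedge A)=(tr A)^2-\|A\|^2$, which let one split $A\wedge A=(A\wedge A)_{\textbf{W}}-\frac{2}{n-2}(A^2)_0\wedge id-\frac{\|A\|^2}{n(n-1)}\mathbf{I}$ when $tr A=0$. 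Substituting, using $tr A=0$, and regrouping by irreducible type gives the stated formula; setting $s(R)=0$ gives the displayed special case for $R\in\langle\textbf{Ric}_0\rangle$. I would keep the bookkeeping honest by two checks: the $\langle\textbf{I}\rangle$-component of $Q(R)$ must equal $\frac{s(Q(R))}{n(n-1)}\mathbf{I}$ with $s(Q(R))=\|Ric(R)\|^2$ by \eqref{scal}, and setting $A=0$ must reduce everything to $Q(\mathbf{I})=(n-1)\mathbf{I}$, consistent with part (2). The only genuinely nontrivial ingredient, and hence the main obstacle, is the sharp-product identity for $A\wedge id$: computing $(A\wedge id)\#(B\wedge id)$ from the Lie-theoretic definition of $\#$ requires a careful evaluation of the structure constants of $\mathfrak{so}(n)$ in the basis $\{e_i\wedge e_j\}$, and all the delicate constants in part (4) descend from it; once it is in hand the reassembly is purely formal.
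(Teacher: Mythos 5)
Your proposal is correct. For parts (1) and (3) it coincides with the paper's own argument: (1) is read off from \eqref{scal}, and (3) follows by polarizing \eqref{scal} to see that $Q(S,W)\in\langle{\textbf{W}}\rangle$ for $S,W\in\langle{\textbf{W}}\rangle$ and then using the symmetry of the trilinear form together with part (2) (the paper evaluates $\textit{tri}(S,R,\mathbf{I})$ via $Q(S,\mathbf{I})=0$ where you use $Q(R,\mathbf{I})=\tfrac{n-2}{2}R$; these are two instances of the same identity and are interchangeable). The divergence is in parts (2) and (4), for which the paper supplies no argument and simply cites B\"ohm--Wilking (Lemmas 2.1 and 2.2). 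For (2), your Schur's-lemma route is a genuine and arguably cleaner alternative to the direct computation in the cited source: once one notes that $R\mapsto R\#\mathbf{I}=Q(R,\mathbf{I})-R$ preserves $S^2_B(\wedge^2\mathbb{R}^n)$ (by polarizing the fact that $Q$ does) and is $O(n)$-equivariant, irreducibility and pairwise inequivalence of the three summands force it to be scalar on each, and only three normalizing constants remain to be checked on test vectors; the structure of the formula comes for free. For (4), you outline essentially the computation that B\"ohm--Wilking perform, and you correctly isolate the expansion of $(A\wedge id)\#(B\wedge id)$ in terms of $A\wedge B$, $(\cdot)\wedge id$ and $\mathbf{I}$ as the one nontrivial input --- this is precisely the content of the lemma the paper cites, so the ``main obstacle'' you flag is real but standard, and your two consistency checks (matching the $\langle{\textbf{I}}\rangle$-component against $s(Q(R))=\|Ric(R)\|^2$ and recovering $Q(\mathbf{I})=(n-1)\mathbf{I}$) are an appropriate safeguard on the constants. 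Your auxiliary identities $(A\wedge id)^2=\tfrac12 A^2\wedge id+\tfrac12 A\wedge A$, $Ric(A\wedge A)=(tr\,A)A-A^2$, and the resulting decomposition of $A\wedge A$ for traceless $A$ all check out.
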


\begin{proof}
The first part follows immediately from \eqref{scal}.
The second part follows from ~\cite{BW} (Lemma 2.1).\\

The third part can be proved using part $1$ and observing that for {\small $S,W \in \langle {\textbf{W}}\rangle;$}  {\small $Q(S,W) \in \langle {\textbf{W}}\rangle.$} Then for {\small $R \in \langle {\textbf{Ric}_0} \rangle,$}
{\small $$\textit{tri}(R,S,W) = \textit{tri}(W,S,R) = 2\langle Q(W,S), R\rangle =0.$$}

Using part $2$, it follows that {\small $Q(S,\mathbf{I})= S + S \# \mathbf{I} =0$} for {\small $S \in \langle {\textbf{W}}\rangle.$} This shows that
{\small $$\textit{tri}(S,R,\textbf{I}) = \textit{tri}(S,\textbf{I},R) = 2\langle Q(S,\textbf{I}), R\rangle =0.$$}
The last part of $3$ follows immediately from the rest.\\

For the fourth part see ~\cite{BW} (Lemma 2.2).
\end{proof}

\section{Zeros of $Q$ and $\widetilde{Q}$}

In this section we describe the zeros of $Q$ and $\widetilde{Q}$ that are curvature operators of symmetric spaces and also prove Theorems \ref{th1} and \ref{th2}. Note that if {\small  $Q(R)=0,$} then using \eqref{scal} one obtains {\small $s(Q(R))= \|Ric(R)\|^2 =0$} i.e., $R$ is Ricci flat. One further observes that
\begin{prop}\label{symm}
Let $(M^n, g)$ be a closed Einstein manifold with unit ( respectively zero) scalar curvature. Then the Riemannian curvature operator $\mathfrak{R}$ is a zero of $\widetilde{Q}$ (respectively $Q$) at each point in $M$ if and only if $(M, g)$ is locally symmetric (respectively flat).
\end{prop}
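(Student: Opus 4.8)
The plan is to reduce the whole statement to a single pointwise Bochner-type identity for Einstein curvature operators and then feed it into an integration-by-parts argument on the closed manifold. Concretely, I would first establish that for an Einstein metric the rough Laplacian of the curvature operator is governed by the reaction term: for unit scalar curvature one has $\Delta \mathfrak{R} = -2\widetilde{Q}(\mathfrak{R})$, and in the Ricci-flat case $\Delta \mathfrak{R} = -2Q(\mathfrak{R})$, where $\Delta = \mathrm{tr}\,\nabla^2$ is the connection Laplacian.

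To prove this identity I would differentiate the second Bianchi identity once more and contract (equivalently, commute two covariant derivatives), which produces (i) terms of the form $\nabla^2 Ric$ and (ii) quadratic curvature terms together with one linear term of the shape $\frac{s}{n}\mathfrak{R}$. The Einstein condition $Ric=\frac{s}{n}\,\mathrm{id}$ forces $\nabla^2 Ric=0$, so the terms in (i) vanish identically. Following Hamilton's computation, the quadratic terms in (ii) assemble exactly into $-2Q(\mathfrak{R})=-2(\mathfrak{R}^2+\mathfrak{R}^{\#})$ once the tensorial expression is translated into the Lie-algebraic $\#$-formalism of Section 2. Finally, imposing $s=1$ turns the linear term $\frac{2s}{n}\mathfrak{R}$ into $\frac{2s^2}{n}\mathfrak{R}=2\|Ric(\mathfrak{R})\|^2\mathfrak{R}$ (since $\|Ric\|^2=s^2/n$ for an Einstein operator), which is precisely the term subtracted off in $\widetilde{Q}$; the check $s^2=s$ at $s=1$ shows why unit scalar curvature is the natural normalization. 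Equivalently, the identity reflects that a unit scalar curvature Einstein metric is a fixed point of the scalar-normalized flow of Remark \ref{repara}, while a Ricci-flat metric is a fixed point of the unnormalized flow \eqref{pde}, so that $\partial_t\mathfrak{R}=0$ in each case.

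With the identity in hand both directions are short. If $(M,g)$ is locally symmetric (resp.\ flat) then $\nabla\mathfrak{R}=0$ (resp.\ $\mathfrak{R}=0$), hence $\Delta\mathfrak{R}=0$, and the identity immediately gives $\widetilde{Q}(\mathfrak{R})=0$ (resp.\ $Q(\mathfrak{R})=0$); this direction is pointwise and uses neither compactness nor integration. Conversely, suppose $\widetilde{Q}(\mathfrak{R})=0$ (resp.\ $Q(\mathfrak{R})=0$) at every point. Then $\Delta\mathfrak{R}=0$ everywhere, so on the closed manifold $M$ integration by parts yields $0=\int_M\langle\Delta\mathfrak{R},\mathfrak{R}\rangle=-\int_M|\nabla\mathfrak{R}|^2$, forcing $\nabla\mathfrak{R}=0$, i.e.\ $(M,g)$ is locally symmetric. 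In the Ricci-flat case this produces a locally symmetric Ricci-flat space, which must be flat: by the de Rham decomposition it splits into irreducible symmetric factors and a Euclidean factor, and the irreducible symmetric factors all have nonzero Einstein constant, so only the flat factor survives.

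The main obstacle is the derivation of the Bochner identity, and within it the bookkeeping that identifies the quadratic curvature terms coming out of the contracted Bianchi identity with $\mathfrak{R}^2+\mathfrak{R}^{\#}$ in the paper's conventions, while correctly tracking the single linear $\frac{s}{n}\mathfrak{R}$ term so that $\|Ric\|^2$ appears with the right constant. Once those coefficients are pinned down, the remainder is the one-line integration argument together with the standard fact that a Ricci-flat locally symmetric space is flat.
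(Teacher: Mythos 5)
Your proposal is correct and follows essentially the same route as the paper: both rest on the identity $\Delta R_g + 2Q(R_g) = \tfrac{2s}{n}R_g$ for Einstein metrics (which the paper cites from Topping rather than rederiving), combined with integration by parts on the closed manifold to pass from $\Delta R_g = 0$ to $\nabla R_g = 0$. Your explicit treatment of the Ricci-flat case (locally symmetric plus Ricci-flat implies flat via the de Rham decomposition) fills in a step the paper dismisses with ``the proof is similar.''
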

\begin{proof}
Let $(M^n, g)$ be Einstein with unit scalar curvature. Then the $(0, 4)$ curvature tensor $R_g$ satisfies the following identity (see ~\cite{PT}, page no. 40)
\begin{equation}\label{laplacee}
\Delta R_g + 2Q(R_g) = \frac{2}{n}R_g
\end{equation}
\noindent
where $Q(R_g)$ is the $(0,4)$ tensor which induces the self-adjoint linear operator $Q(\mathfrak{R})_p$ on $\wedge^2T_pM$ for each $p \in M.$
Again due to unit scalar curvature, we have $\|Ric_g\|^2 = \frac{1}{n}$
and by our assumption, $$Q(R_g) = \|Ric_g\|^2R_g = \frac{1}{n}R_g$$ at each point in $M$.

Combining these two it follows that at each point in $M,$  $\Delta R_g = 0$. Since $(M, g)$ is closed, it is equivalent to saying that $R_g$ is parallel and $(M,g)$ is locally symmetric.

The converse is easy to see as the curvature operator of every locally symmetric Einstein manifold with unit scalar curvature is a zero of $\widetilde{Q}$ by the above identity ~\eqref{laplacee}.

For the Ricci flat case the proof is similar.
\end{proof}

Next one observes that
\begin{prop}\label{prop}
Let {\small $(M_i^{n_i}, g_i)$} ($i=1,2$) be symmetric Einstein manifolds such that the curvature operator of the product space {\small $(M_1^{n_1} \times M_2^{n_2}, \tilde{g})$} is a zero of $\widetilde{Q}$ where $\tilde{g}$ is the product metric normalized to unit scalar curvature. Then either $\tilde{g}$ is Einstein or one of $g_1,g_2$ is flat.
\end{prop}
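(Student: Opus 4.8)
The plan is to reduce everything to the product structure and to the fact that each symmetric Einstein factor is an eigenvector of the vector field $Q$. Let $R = R_1 + R_2 \in S^2_B(\wedge^2\mathbb{R}^{n_1+n_2})$ be the product curvature operator, where $R_i \in S^2_B(\wedge^2\mathbb{R}^{n_i})$ is the curvature operator of $(M_i,g_i)$, and write $\lambda_i$ for the Einstein constant of $g_i$, so that $Ric(R_i) = \lambda_i\, id_{\mathbb{R}^{n_i}}$ and $s(R_i) = \lambda_i n_i$. Since $R$ is a product curvature operator, Corollary \ref{coro} gives $Q(R) = Q(R_1) + Q(R_2)$, so the entire problem hinges on computing $Q(R_i)$ for a symmetric Einstein factor.

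The first substantive step is to establish the eigenvector relation $Q(R_i) = \lambda_i R_i$ for each $i$. If $g_i$ is not flat, then $R_i \neq 0$ and $s(R_i) \neq 0$; normalizing $\hat{R}_i = R_i/s(R_i)$ to unit scalar curvature and using that $g_i$ is locally symmetric and Einstein, Proposition \ref{symm} gives $\widetilde{Q}(\hat{R}_i) = 0$, i.e. $Q(\hat{R}_i) = \|Ric(\hat{R}_i)\|^2 \hat{R}_i = \tfrac{1}{n_i}\hat{R}_i$. Because $Q$ is homogeneous of degree two, this upgrades to $Q(R_i) = s(R_i)^2 Q(\hat{R}_i) = \tfrac{s(R_i)}{n_i} R_i = \lambda_i R_i$. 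If instead $g_i$ is flat, then $R_i = 0$ and we are immediately in the desired conclusion, so I may assume from now on that neither factor is flat.

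Assuming both factors nonflat, combine the pieces: $Q(R) = \lambda_1 R_1 + \lambda_2 R_2$, while $Ric(R) = \lambda_1\, id_{\mathbb{R}^{n_1}} \oplus \lambda_2\, id_{\mathbb{R}^{n_2}}$ gives $\mu := \|Ric(R)\|^2 = \lambda_1^2 n_1 + \lambda_2^2 n_2$. The hypothesis $\widetilde{Q}(R) = 0$, that is $Q(R) = \mu R$, then reads
$$(\lambda_1 - \mu) R_1 + (\lambda_2 - \mu) R_2 = 0 .$$
Now $R_1$ and $R_2$ act nontrivially only on the mutually orthogonal summands $\wedge^2\mathbb{R}^{n_1}$ and $\wedge^2\mathbb{R}^{n_2}$ of $\wedge^2\mathbb{R}^{n_1+n_2}$; restricting the displayed identity to each summand and using $R_i \neq 0$ forces $\lambda_1 = \mu = \lambda_2$. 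Consequently $Ric(R) = \mu\, id_{\mathbb{R}^{n_1+n_2}}$, so $R$ is Einstein and $\tilde{g}$ is an Einstein metric, completing the dichotomy.

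The genuinely delicate points, which I expect to be the main obstacles, are exactly the two inputs that make the comparison of coefficients work: first, pinning down the eigenvector relation $Q(R_i) = \lambda_i R_i$ for a symmetric Einstein factor of arbitrary scalar curvature, which is handled by combining Proposition \ref{symm} with the scaling behaviour of $Q$ (and observing that flatness is equivalent to $R_i = 0$, so the Ricci-flat case is absorbed into the flat alternative); and second, the linear independence of $R_1$ and $R_2$ as operators on $\wedge^2\mathbb{R}^{n_1+n_2}$, which follows from the disjoint block structure of a product curvature operator. Once these are in place the remaining argument is a direct reading-off of coefficients.
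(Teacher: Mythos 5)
Your proof is correct and follows essentially the same route as the paper: both arguments come down to the eigenvector relation $Q(R_i)=\lambda_i R_i$ for each symmetric Einstein factor (the paper extracts it from the Lichnerowicz identity \eqref{lap} applied to the parallel product curvature; you extract it from Proposition \ref{symm} plus degree-two homogeneity, and then use Corollary \ref{coro} to assemble the product), followed by comparing coefficients on the two orthogonal blocks of $\wedge^2\mathbb{R}^{n_1+n_2}$. The only caveats are minor: your normalization $\hat R_i=R_i/s(R_i)$ invokes Proposition \ref{symm} legitimately only when $s(R_i)>0$, so for a factor of negative Einstein constant you should instead quote the unnormalized identity $\Delta R_{g_i}+2Q(R_{g_i})=\tfrac{2s}{n_i}R_{g_i}$ directly, and your assertion that a non-flat factor has $s(R_i)\neq 0$ rests (as does the paper's corresponding step) on the standard fact that a Ricci-flat locally symmetric space is flat.
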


\begin{proof}
Let $\lambda_i$ denote the Einstein constants of $g_i\;, i = 1, 2.$  Then {\small $\tilde{g} = (n_1\lambda_1 + n_2\lambda_2)(g_1 + g_2)$} and at least one of the constants is nonzero as {\small $s(\tilde{g}) = 1$}.

Consider an orthonormal basis {\small $\{e_i\}^{n_1 + n_2}_{i = 1}$} at a point {\small $(m_1, m_2)\in M_1 \times M_2$} where {\small $\{e_i\}^{n_1}_{i = 1}$} and {\small $\{e_i\}^{n_1 + n_2}_{i = n_1 + 1}$} form bases of {\small $T_{m_1}M^{n_1}_1$} and {\small $T_{m_2}M^{n_2}_2$} respectively. Let {\small $R_{i},\; Ric_{i},\; \mathfrak{R}_i$} denote the curvature tensor, Ricci tensor and curvature operator of {\small $g_i, \; i = 1, 2$}. Let {\small $R_{\tilde{g}},\; Ric_{\tilde{g}},\; \mathfrak{R}$} denote corresponding entities of $\tilde{g}$.
We have
{\small \begin{equation}\label{prop1}
(Ric_{\tilde{g}})_{ii} = \begin{cases} (Ric_{1})_{ii}\; =\; \frac{\lambda_1}{n_1\lambda_1 + n_2\lambda_2} \;\;\mbox{if}\;\;\; 1 \leq i \leq n_1; \\
\    \\(Ric_{2})_{ii}\; =\; \frac{\lambda_2}{n_1\lambda_1 + n_2\lambda_2} \;\;\mbox{if}\;\;\; n_1 + 1 \leq i \leq n_1 + n_2.
\end{cases}
\end{equation}}
and {\small $\|Ric_{\tilde{g}}\|^2 = \frac{n_1\lambda_1^2 + n_2\lambda_2^2}{(n_1\lambda_1 + n_2\lambda_2)^2}$}.

The Laplacian $\Delta R_{\tilde{g}}$ is given by  (see ~\cite{PT}, page no. 40)
{\small \begin{equation}\label{lap}
\begin{split}
&\big(\Delta R_{\tilde{g}} + 2Q(R_{\tilde{g}})\big)(x, y, w, z)\\ &= \nabla^2_{x,w}Ric_{\tilde{g}}(y,z)+ \nabla^2_{y,z}Ric_{\tilde{g}}(x,w)- \nabla^2_{x,z}Ric_{\tilde{g}}(y,w)-\nabla^2_{y,w}Ric_{\tilde{g}}(x,z)\\
&+ Ric_{\tilde{g}}(R_{\tilde{g}}(w, z )x, y) - Ric_{\tilde{g}}(R_{\tilde{g}}(w, z )y, x)
\end{split}
\end{equation}}
where {\small $Q(R_{\tilde{g}})(x, y, w, z) = Q(\mathfrak{R}_{\tilde{g}})(x, y, w, z)$}.

As {\small $R_{\tilde{g}}$} is parallel, using \eqref{lap} and \eqref{prop1} one obtains
{\small \begin{equation*}
Q(R_{\tilde{g}})_{ijkl} = \begin{cases}\frac{\lambda_1}{n_1\lambda_1 + n_2\lambda_2} (R_{1})_{ijkl} \;\;\mbox{if}\;\;\; 1 \leq i, j, k, l \leq n_1;\\
\             \\\frac{\lambda_2}{n_1\lambda_1 + n_2\lambda_2} (R_{2})_{ijkl} \;\;\mbox{if}\;\;\; n_1 + 1 \leq i, j, k, l \leq n_1 + n_2;\\
\     \\\;\;\;0\;\;\;\;\;\mbox{otherwise}.
\end{cases}
\end{equation*}}
Let both the constants be non zero.
Then {\small $Q(R_{\tilde{g}}) = \|Ric_{\tilde{g}}\|^2R_{\tilde{g}}$} gives
{\small \begin{equation}\label{product}
\begin{cases}
\bigg(\frac{\lambda_1}{n_1\lambda_1 + n_2\lambda_2} - \frac{n_1\lambda_1^2 + n_2\lambda_2^2}{(n_1\lambda_1 + n_2\lambda_2)^2}\bigg)(R_{1})_{ijkl} = 0 \;\;\mbox{if}\;\;i, j, k, l \leq n_1;\\
\                            \\\bigg(\frac{\lambda_2}{n_1\lambda_1 + n_2\lambda_2} - \frac{n_1\lambda_1^2 + n_2\lambda_2^2}{(n_1\lambda_1 + n_2\lambda_2)^2}\bigg)(R_{2})_{ijkl} = 0 \;\;\mbox{if}\;\;i, j, k, l \geq (n_1 + 1).
\end{cases}
\end{equation}}
As none of $g_1, g_2$ is flat, this means
{\small $$\frac{n_1\lambda_1(\lambda_1 - \lambda_2)}{(n_1\lambda_1 + n_2\lambda_2)^2} = \frac{n_2\lambda_2(\lambda_2 - \lambda_1)}{(n_1\lambda_1 + n_2\lambda_2)^2} = 0$$}
i.e.,$\lambda_1 = \lambda_2$ and $\tilde{g}$ is Einstein. \\

If one of the constants is zero then the corresponding metric is flat. For definiteness, let {\small $\lambda_2=0.$} Then $g_1$ is Einstein with unit scalar curvature i.e., {\small $\lambda_1= \frac{1}{n_1}$} and one has {\small $(R_{2})_{ijkl} = 0 \;\;\forall \;\;i, j, k, l \geq (n_1 + 1).$} Using \eqref{product} we see that {\small $Q(R_{\tilde{g}}) = \|Ric_{\tilde{g}}\|^2R_{\tilde{g}}$} still holds. This completes the proof.
\end{proof}

\begin{rem}
Note that if $(M^n, g)$ is not a spherical space form or $k >1$, then the product curvature operator of {\small $M^n \times \mathbb{R}^k$} is a zero of $\widetilde{Q}$ which is neither Einstein and nor of Ricci type.

Since a symmetric space can be decomposed into Riemannian product of irreducible symmetric spaces and Euclidean space, it follows that Proposition \ref{prop} describes all the zeros of $\widetilde{Q}$ corresponding to curvature operators of (locally) symmetric spaces.
\end{rem}

The question still remains whether there exist algebraic curvature operators that are zeros of $Q$ or $\widetilde{Q}$ which can not be realized as the Riemann curvature operator of any (locally) symmetric space. Where we do not know the complete set of zeros of $Q$ or $\widetilde{Q}$ in full generality, we give a full description of the zeros of $Q$ and Einstein zeros of $\widetilde{Q}$ in dimension $4$ and also zeros of $\widetilde{Q}$ of Ricci type in all dimensions as follows.
First we prove that
\begin{prop}\label{unique}
Let {\small $R \in S^2_B(\wedge^2{\mathbb{R}^4})$} be a zero of $Q.$ Then $R$ is the zero operator.
\end{prop}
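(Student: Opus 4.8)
The plan is to first reduce to a Weyl operator and then exploit the special algebra of dimension four. As already noted in the text, if $Q(R)=0$ then $s(Q(R))=\|Ric(R)\|^2=0$, so $R$ is Ricci-flat and therefore $R=R_{\textbf{W}}$ is a Weyl operator. In dimension four this is the decisive simplification: $\wedge^2\mathbb{R}^4$ splits orthogonally as $\wedge^2_+\oplus\wedge^2_-$ into the $\pm1$ eigenspaces of the Hodge star, and a scalar-flat Ricci-flat operator is block diagonal, $R=W_+\oplus W_-$, where each $W_\pm$ is a trace-free self-adjoint endomorphism of the three-dimensional space $\wedge^2_\pm$ (the off-diagonal block is the trace-free Ricci part, which vanishes here).

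Next I would compute $Q(R)=R^2+R^\#$ in this splitting, using the structural fact $\mathfrak{so}(4)=\mathfrak{so}(3)\oplus\mathfrak{so}(3)$, i.e. $[\wedge^2_+,\wedge^2_-]=0$ with each $\wedge^2_\pm$ a Lie subalgebra isomorphic to $\mathfrak{so}(3)$. Since the bracket preserves the two factors and $R$ does too, one checks directly from the definition of $\#$ that $R^\#$ is again block diagonal with no cross terms between $\wedge^2_+$ and $\wedge^2_-$, and that on each factor it reduces to $c$ times the adjugate (cofactor) operator, the constant $c$ being fixed by the structure constants of $\wedge^2_\pm$. As $R^2$ is trivially block diagonal, the equation $Q(R)=0$ decouples into the two independent equations $W_+^2+W_+^\#=0$ and $W_-^2+W_-^\#=0$.

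I would then diagonalize $W_+$, with eigenvalues $a_1,a_2,a_3$ satisfying $a_1+a_2+a_3=0$; since the adjugate of a diagonal operator is diagonal in the same basis, the equation becomes the cyclic system $a_i^2+c\,a_ja_k=0$. One subtlety appears here: an explicit computation of the bracket on the normalized self-dual basis (the $\tfrac{1}{\sqrt2}$ factors produce structure constants $\sqrt2$ times the usual $\mathfrak{so}(3)$ ones) gives $c=2$, for which the quick device of summing the three equations is inconclusive. A short case analysis — subtracting the equations in pairs and invoking the trace-free relation — nonetheless forces $a_1=a_2=a_3=0$, so $W_+=0$; the identical argument gives $W_-=0$, and hence $R=0$.

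The main obstacle is the second step: establishing cleanly that $\#$ respects the Hodge splitting and pinning down the correct constant $c$ in the three-dimensional reduction. Once that block computation is in hand, the reduction to the Weyl part and the final eigenvalue algebra are both routine.
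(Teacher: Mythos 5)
Your proposal is correct and follows essentially the same route as the paper: reduce to the Weyl part via $s(Q(R))=\|Ric(R)\|^2=0$, block-diagonalize over $\wedge^{+}\mathbb{R}^4\oplus\wedge^{-}\mathbb{R}^4$, and solve the cyclic system $a_i^2+c\,a_ja_k=0$ with $a_1+a_2+a_3=0$ on each factor. In fact you supply a detail the paper leaves implicit, namely that the structure constant gives $c=2$ so the naive summation argument degenerates and a short case analysis is needed to force the trivial solution.
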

\begin{proof}
The Hodge star operator {\small $\ast: \wedge^2{\mathbb{R}^4} \rightarrow \wedge^2{\mathbb{R}^4}$} defined by
{\small $$\langle \ast\eta_1 , \eta_2\rangle\theta = \eta_1 \wedge \eta_2; \ \ \eta_1, \eta_2 \in \wedge^2{\mathbb{R}^4}$$}
\noindent
where {\small $\theta \in \wedge^4 \mathbb{R}^4$ denotes an orientation on {\small $\mathbb{R}^4,$} is an involution and has eigenvalues $\pm1$. One has the following orthogonal decomposition {\small $\wedge^2{\mathbb{R}^4} = \wedge^{+}\mathbb{R}^4 \oplus \wedge^{-}\mathbb{R}^4$}}
where  {\small $\wedge^{\pm}\mathbb{R}^4$} denote $`\pm'$ eigen spaces of $\ast$ respectively.

Let {\small $R \in S^2_B(\wedge^2{\mathbb{R}^4})$} be Einstein i.e., {\small $R = \frac{s(R)}{12}\mathbf{I} + R_{\textbf{W}}.$} Then $R$ preserves the above orthogonal decomposition i.e.,
{\small $R(\wedge^{+}\mathbb{R}^4) \subset \wedge^{+}\mathbb{R}^4 \ \ {\rm and} \ \ R(\wedge^{-}\mathbb{R}^4)\subset \wedge^{-}\mathbb{R}^4$}
and the matrix of $R$ with respect to a basis of {\small $\wedge^2{\mathbb{R}^4}$} respecting the orthogonal decomposition, is of the form
{\small $$R=\left(
  \begin{array}{cc}
    \frac{s(R)}{12}\mathbf{I} + R^{+}_{\textbf{W}} & \mathbf{0} \\
    \mathbf{0} & \frac{s(R)}{12}\mathbf{I} + R^{-}_{\textbf{W}} \\
  \end{array}
\right)$$}
\noindent
where {\small $R^{+}_{\textbf{W}} = R_{\textbf{W}}|_{\wedge^{+}\mathbb{R}^4}$} and {\small $R^{-}_{\textbf{W}} = R_{\textbf{W}}|_{\wedge^{-}\mathbb{R}^4}$} denote the self-dual and anti self-dual Weyl components of $R$ and $\mathbf{0}$ the zero matrix. Also {\small $tr(R^{\pm}_{\textbf{W}}) =0.$}

If {\small $Q(R)= \mathbf{0}$} then {\small $R = R_{\textbf{W}}$} with {\small $R^2_{\textbf{W}}+R^{\#}_{\textbf{W}} = 0$} and the matrix of $R$ reduces to
{\small $$R= \left(
  \begin{array}{cc}
    R^{+}_{\textbf{W}} & \mathbf{0} \\
    \mathbf{0} & R^{-}_{\textbf{W}} \\
  \end{array}
\right)$$}
Let {\small $\{ \xi_1,....,\xi_6\}$} be an orthonormal basis of {\small $\wedge^2{\mathbb{R}^4}$} diagonalizing $R.$ Without loss of generality, assume that
{\small $\{\xi_1, \xi_2, \xi_3\} \;$} and {\small $\; \{\xi_4, \xi_5, \xi_6\}\;$} span {\small $\wedge^{+}\mathbb{R}^4$} and {\small $\wedge^{-}\mathbb{R}^4$} respectively. Then matrices of {\small $R^{\pm}_{\textbf{W}}$} with respect to these ordered bases are diagonal and given by
{\small $$R^{+}_{\textbf{W}} = diag(x_1,x_2,x_3), \ \ R^{-}_{\textbf{W}}=diag(y_1,y_2,y_3)$$}
where {\small $x_1 + x_2 + x_3= 0= y_1 + y_2 + y_3.$} Here by $diag(a,b,c)$ we denote the diagonal matrix of order $3$ with diagonal entries $a,b,c$ respectively. \\

From Proposition \ref{propn}, it follows that $Q(R)$ is Ricci flat. It is easy to see that {\small $Q(R_{\textbf{W}})^{\pm} = Q(R^{\pm}_{\textbf{W}})$} and the matrices of {\small $Q(R^{\pm}_{\textbf{W}})$} with respect to the same ordered bases are also diagonal of the form
{\small $$Q(R^{+}_{\textbf{W}})=diag(x_1^2 + c_1 x_2x_3,x_2^2 + c_1 x_1x_3,x_3^2 + c_1 x_1x_2)$$}
and
{\small $$Q(R^{-}_{\textbf{W}})=diag(y_1^2 + c_2 y_1y_2, y_2^2 + c_2 y_2y_3, y_3^2 + c_2 y_1y_2)$$}
where {\small $c_1= \langle [\xi_1, \xi_2], \xi_3\rangle^2 $} and {\small $c_2= \langle [\xi_4, \xi_5], \xi_6\rangle^2$} are square of the non-vanishing structure constants of the above basis. One has {\small $Q(R^{\pm}_{\textbf{W}})= \mathbf{0}$} which gives the following systems of homogeneous equations
\begin{align}
\begin{cases}
x_1^2 + c_1 x_2x_3 = 0 ;\\
x_2^2 + c_1 x_1x_3 = 0 ;\\
x_3^2 + c_1 x_1x_2 =0
\end{cases}
\end{align}
\begin{align}
\begin{cases}
y_1^2 + c_2 y_2y_3 = 0 ;\\
y_2^2 + c_2 y_1y_3 = 0 ;\\
y_3^2 + c_2 y_1y_2 =0
\end{cases}
\end{align}
with {\small $x_1 + x_2 + x_3 = 0 = y_1 + y_2 + y_3.$}
It follows that the systems admit only trivial solution i.e., {\small $R^{\pm}_{\textbf{W}} = \mathbf{0}.$}
\end{proof}
The above proof requires the orthogonal splitting of {\small $\wedge^2\mathbb{R}^4$} which is not true for dimensions more than $4$. Unlike dimension $4$, in higher dimensions an eigen basis of an Einstein curvature operator $R$ need not always diagonalize $Q(R)$ and one fails to get such a simplified expression for $Q(R).$

For Einstein zeros of $\widetilde{Q}$ in dimension $4$ we prove the following
\begin{prop}\label{uniquea}
If {\small $R \in S^2_B(\wedge^2{\mathbb{R}^4})$} is an Einstein zero of $\widetilde{Q}$, then up to the action of the orthogonal group, $R$ is one of the following
\begin{enumerate}
\item  Curvature operator of the round metric on $S^4$ normalized to unit scalar curvature,
\item Curvature operator of the standard product metric on $S^2 \times S^2$ normalized to unit scalar curvature,
\item Curvature operator of the Fubini-Study metric on $\mathbb{C}\mathbb{P}^2$ normalized to unit scalar curvature.
\end{enumerate}
\end{prop}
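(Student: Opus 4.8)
The plan is to turn $\widetilde{Q}(R)=0$ into a small polynomial system for the eigenvalues of $R$ on the two summands of the Hodge splitting, proceeding exactly as in the proof of Proposition \ref{unique} but now retaining the trace part of $R$, since here $R$ is only assumed Einstein rather than Ricci-flat. First I would record what the hypotheses give. As $\widetilde{Q}$ is a vector field on $\mathbf{S}_1$, a zero of $\widetilde{Q}$ satisfies $s(R)=1$; being Einstein then forces $Ric(R)=\frac{1}{4}\,id_{\mathbb{R}^4}$, so $\|Ric(R)\|^2=\frac{1}{4}$, and $\widetilde{Q}(R)=0$ becomes the eigenvalue equation $Q(R)=\frac{1}{4}R$. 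Since $R$ is Einstein it preserves $\wedge^{2}\mathbb{R}^4=\wedge^{+}\mathbb{R}^4\oplus\wedge^{-}\mathbb{R}^4$ and, as in Proposition \ref{unique}, is block diagonal with blocks $A=R|_{\wedge^{+}}=\frac{1}{12}\mathbf{I}+R^{+}_{\textbf{W}}$ and $B=R|_{\wedge^{-}}=\frac{1}{12}\mathbf{I}+R^{-}_{\textbf{W}}$.

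Next, using that $\mathfrak{so}(4)=\mathfrak{su}(2)\oplus\mathfrak{su}(2)$ is realized as $\wedge^{+}\oplus\wedge^{-}$ with $[\wedge^{+},\wedge^{-}]=0$, the operator $\#$ respects the splitting, so $Q(R)|_{\wedge^{\pm}}=Q(R|_{\wedge^{\pm}})$. Diagonalizing $A=\mathrm{diag}(a_1,a_2,a_3)$ and $B=\mathrm{diag}(b_1,b_2,b_3)$ in an orthonormal basis adapted to the splitting, the very computation of Proposition \ref{unique} yields
\[
Q(A)=\mathrm{diag}\big(a_1^2+c\,a_2a_3,\; a_2^2+c\,a_1a_3,\; a_3^2+c\,a_1a_2\big),
\]
and similarly for $B$, where $c$ is the squared structure constant of each $\mathfrak{su}(2)$-factor. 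A direct bracket computation in the standard self-dual basis gives $c=2$, a value I expect to be decisive: summing the three scalar equations below collapses to an identity precisely when $c=2$, and this is what will produce a clean factorization rather than an overdetermined system.

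Writing out $Q(R)=\frac{1}{4}R$ on the $\wedge^{+}$ block gives
\[
a_i^2+2\,a_ja_k=\tfrac{1}{4}a_i,\qquad \{i,j,k\}=\{1,2,3\},
\]
together with the constraint $a_1+a_2+a_3=\frac{1}{4}$ coming from $s(R)=1$, and the identical system for the $b_i$. Subtracting the $i$-th and $j$-th equations and factoring, then using the constraint, reduces each relation to $(a_i-a_j)\,a_k=0$. A short case analysis on how many of the $a_i$ vanish then shows that, up to permutation, the only solutions are $(\frac{1}{12},\frac{1}{12},\frac{1}{12})$ (i.e.\ $R^{+}_{\textbf{W}}=0$) and $(0,0,\frac{1}{4})$, and likewise for the $b_i$.

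Finally I would assemble the four resulting pairs of triples and identify them geometrically. Both blocks $(\frac{1}{12},\frac{1}{12},\frac{1}{12})$ give $R=\frac{1}{12}\mathbf{I}$, the constant-curvature operator of $S^4$; both blocks $(0,0,\frac{1}{4})$ give the operator of $S^2\times S^2$, whose single nonzero sectional eigenvalue $\frac{1}{4}$ sits in each of $\wedge^{\pm}$; and the two mixed cases each have vanishing Weyl on one self-duality block and Weyl eigenvalues proportional to $(2,-1,-1)$ on the other, the Kähler signature of $\mathbb{C}\mathbb{P}^2$ with the Fubini--Study metric. Since the two mixed cases are interchanged by an orientation-reversing element of $O(4)$, up to the $O(4)$-action exactly the three operators (1)--(3) occur. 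I expect the main obstacle to be bookkeeping rather than depth: computing $c=2$ correctly (and recognizing its special role in degenerating the summed equations) and matching the mixed eigenvalue pattern $(2,-1,-1)$ to $\mathbb{C}\mathbb{P}^2$ are the two places where an error is most likely to enter.
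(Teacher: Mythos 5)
Your proposal is correct and follows essentially the same route as the paper: block-diagonalize over $\wedge^{+}\mathbb{R}^4\oplus\wedge^{-}\mathbb{R}^4$, reduce $Q(R)=\tfrac14 R$ to the system $\lambda_i^2+\mu\lambda_j\lambda_k=\tfrac14\lambda_i$ with $\lambda_1+\lambda_2+\lambda_3=\tfrac14$ on each factor, and match the resulting eigenvalue triples to $S^4$, $S^2\times S^2$ and $\mathbb{C}\mathbb{P}^2$. If anything you are slightly more careful than the paper, which calls the structure constant $\mu$ only ``a positive number'': the value $\mu=2$ is genuinely needed for $(\tfrac{1}{12},\tfrac{1}{12},\tfrac{1}{12})$ to solve the system, and your explicit computation of it closes that small gap.
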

\begin{proof}
Let {\small $R \in S^2_B(\wedge^2{\mathbb{R}^4})$} be an Einstein zero of $\widetilde{Q}$  i.e., $s(R) = 1$ and {\small $$R^2+R^{\#}= \|Ric(R)\|^2R = \frac{1}{4}R.$$}
It suffices to show that $R$ has same eigenvalues as one of the curvature operators mentioned in the hypothesis.
Following Proposition \ref{unique}, the matrix of $R$ with respect to a basis of {\small $\wedge^2{\mathbb{R}^4}$} which respects the orthogonal decomposition {\small $\wedge^2{\mathbb{R}^4} = \wedge^{+}\mathbb{R}^4 \oplus \wedge^{-}\mathbb{R}^4,$} is of the form
{\small $$R= \left(
  \begin{array}{cc}
   \frac{1}{12}\mathbf{I}+ R^{+}_{\textbf{W}} & \mathbf{0} \\
    \mathbf{0} & \frac{1}{12}\mathbf{I} + R^{-}_{\textbf{W}} \\
  \end{array}
\right)$$}
\noindent
where {\small $R^{+}_{\textbf{W}}$} and {\small $R^{-}_{\textbf{W}}$} denote the self-dual and anti self-dual Weyl components of $R$ respectively. Also {\small $tr(R^{\pm}_{\textbf{W}}) =0.$}
Consider an ordered orthonormal basis {\small $\{ \xi_1,....,\xi_6\}$} of {\small $\wedge^2{\mathbb{R}^4}$} as in Proposition \ref{unique} which diagonalizes $R.$ Denoting {\small $\frac{1}{12}\mathbf{I}+R^{\pm}_{\textbf{W}}$} by {\small $R^{\pm}$}, one has
{\small $$R^{+} = diag(x_1,x_2,x_3) \ \ {\rm and} \ \ R^{-}=diag(y_1,y_2,y_3)$$}
with  {\small $x_1 + x_2 + x_3= \frac{1}{4}= y_1 + y_2 + y_3,$} as
{\small $tr( R^{\pm}) = \frac{1}{4}.$}

Eigenvalues of $R^{\pm}$ collectively give the eigenvalues of $R.$  If $R$ is one of the Riemannian curvature operators mentioned in the hypothesis, then $R^{\pm}$ and their eigenvalues are listed below
\begin{enumerate}
\item  For curvature operator of the normalized round sphere $S^4$, {\small $R^{\pm}=\frac{1}{12}Id$} i.e., the only eigenvalue is {\small $\frac{1}{12}$} with multiplicity three;
\item For curvature operator of the normalized product $S^2 \times S^2$, $R^{\pm}$ have eigenvalues {\small $0,0,\frac{1}{4};$}
\item For curvature operator of the normalized Fubini-Study metric on $\mathbb{C}\mathbb{P}^2$, $R^{+}$ has eigenvalues {\small $0,0,\frac{1}{4}$} and {\small $R^{-}=\frac{1}{12}Id$}.
\end{enumerate}

Since $R$ is Einstein, so is $Q(R)$which is also diagonalized by the eigen basis of $R$ and {\small $Q(R^{\pm})= Q(R)^{\pm}= \frac{1}{4}R^{\pm}.$}

As in Proposition \ref{unique}, the matrices of {\small $Q(R^{\pm})$} with respect to the ordered bases are
{\small $$Q(R^{+})=diag(x_1^2 + c_1 x_2x_3,x_2^2 + c_1 x_1x_3,x_3^2 + c_1 x_1x_2)$$}
and
{\small $$Q(R^{-})=diag(y_1^2 + c_2 y_1y_2, y_2^2 + c_2 y_2y_3, y_3^2 + c_2 y_1y_2)$$}
where $c_1$ and $c_2$ are as in Proposition \ref{unique}.
Then it reduces to solving the system
{\small \begin{align}\label{sec}
\begin{cases}
\lambda_1^2 + \mu \lambda_2\lambda_3 &= \frac{1}{4}\lambda_1 ;\\
\lambda_2^2 + \mu \lambda_1\lambda_3 &= \frac{1}{4}\lambda_2 ;\\
\lambda_3^2 + \mu \lambda_1\lambda_2 &= \frac{1}{4}\lambda_3;\\
\lambda_1 + \lambda_2 + \lambda_3 &= \frac{1}{4}.
\end{cases}
\end{align}}
where $\mu$ is a positive number.
It follows that the only non trivial solutions to this system are
{\small $$(\lambda_1, \lambda_2, \lambda_3) = (0,0,\frac{1}{4}), (0,\frac{1}{4},0), (\frac{1}{4},0,0), (\frac{1}{12},\frac{1}{12},\frac{1}{12}).$$}
These give all possible eigenvalues of $R^{\pm}$ which coincide with those of the curvature operators enlisted in the hypothesis, and the proposition follows.
\end{proof}

The list of algebraic curvature operators given in Proposition \ref{uniquea} describes (up to scaling) all the curvature operators of (locally) symmetric Einstein $4$-manifolds with positive scalar curvature. One can thus say that in dimension $4$ an algebraic Einstein curvature operator $R$ with positive scalar curvature satisfying $Q(R) = \lambda R,$ must be the curvature operator of some (locally) symmetric $4$-manifold. Propositions \ref{unique} and \ref{uniquea} together complete the proof of Theorem \ref{th1}.

For dimensions greater than $4$ one can not say much about the full set of Einstein zeros of $\widetilde{Q}.$ Although for zeros of $\widetilde{Q}$ of Ricci type we prove that

\begin{thm}
Let {\small $R \in S^2_B(\wedge^2\mathbb{R}^{n+1})$} be a zero of $\widetilde{Q}$ of Ricci type which is not a multiple of the identity. Then up to the action of the orthogonal group, $R$ is the curvature operator of the standard product metric on {\small $S^n\times \mathbb{R}$} normalized to unit scalar curvature.
\end{thm}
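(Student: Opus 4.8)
The plan is to exploit the fact that, for a Ricci type $R$, the equation $\widetilde{Q}(R)=0$ forces both sides to lie in the Ricci type subspace $\langle\mathbf{I}\rangle\oplus\langle\textbf{Ric}_0\rangle$, and then to read off strong spectral constraints on the Ricci tensor. To avoid a clash with the dimension $n+1$ in the statement, set $N=n+1$ and write $T=Ric(R)$, $T_0=Ric_0(R)$; since $R$ is of Ricci type it is completely determined by $T$ via $R=\frac{s(R)}{N(N-1)}\mathbf{I}+\frac{2}{N-2}T_0\wedge id$. The hypothesis with $s(R)=1$ reads $Q(R)=\|Ric(R)\|^2 R$, and as the right hand side is again of Ricci type the Weyl component of $Q(R)$ must vanish. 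By the explicit formula of Proposition \ref{propn}(4), applied in dimension $N$, every term of $Q(R)$ except $\frac{1}{N-2}T_0\wedge T_0$ is of the form $(\text{symmetric})\wedge id$ and hence Ricci type, so the whole condition collapses to $(T_0\wedge T_0)_{\textbf{W}}=0$.

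First I would diagonalize $T_0$ in an orthonormal basis $\{e_i\}$ with eigenvalues $\mu_1,\dots,\mu_N$ satisfying $\sum_i\mu_i=0$. Then $T_0\wedge T_0$ is diagonal in $\{e_i\wedge e_j\}_{i<j}$ with eigenvalue $\mu_i\mu_j$, and a direct computation gives its Ricci tensor as $-T_0^2$; subtracting the associated Ricci type operator, which is also diagonal in the same basis, shows that the Weyl part is diagonal with entry on $e_i\wedge e_j$ equal to $\frac{1}{N-2}\big(\mu_i^2+\mu_j^2+(N-2)\mu_i\mu_j-\tfrac{\|T_0\|^2}{N-1}\big)$. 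Thus $(T_0\wedge T_0)_{\textbf{W}}=0$ is equivalent to the system $\mu_i^2+\mu_j^2+(N-2)\mu_i\mu_j=C$ for all $i\neq j$, with $C=\tfrac{\|T_0\|^2}{N-1}$ constant. Taking differences of two equations sharing an index gives $(\mu_j-\mu_k)\big(\mu_j+\mu_k+(N-2)\mu_i\big)=0$, and I would use this to show that for $N\geq 4$ at most two distinct eigenvalues occur: three distinct values would yield three linear relations whose pairwise differences force $(N-3)(\mu_i-\mu_j)=0$, impossible. A short multiplicity argument (if both values had multiplicity $\geq 2$ the equations $Na^2=Nb^2=C$ together with the mixed one would force $N=2$) then leaves exactly the pattern of a single eigenvalue $a=-(N-1)b$ together with $b$ repeated $N-1$ times, for some $b\neq0$ since $R$ is not a multiple of the identity.

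It remains to fix the scalar $b$ and to identify $R$. On this spectral pattern one computes $(T_0^2)_0=-(N-2)b\,T_0$, so the $\langle\textbf{Ric}_0\rangle$ part of $Q(R)$ becomes a scalar multiple of $T_0\wedge id$; matching it against $\|Ric(R)\|^2 R_{\textbf{Ric}_0}$, using $\|Ric(R)\|^2=\|T_0\|^2+\tfrac1N=N(N-1)b^2+\tfrac1N$, reduces after simplification to $(N(N-1)b-1)^2=0$, hence $b=\tfrac{1}{N(N-1)}$, and one checks the $\langle\mathbf{I}\rangle$ component is then automatically satisfied. Consequently $Ric(R)$ has eigenvalue $0$ once and $\tfrac{1}{N-1}=\tfrac1n$ with multiplicity $n$, which is exactly the Ricci tensor of the product metric on $S^n\times\mathbb{R}$ normalized to unit scalar curvature; since a Ricci type operator is determined by its Ricci tensor and $S^n\times\mathbb{R}$ is conformally flat, $R$ coincides, up to the $O(N)$ action moving the distinguished direction, with the claimed curvature operator. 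I expect the spectral analysis of the quadratic system $\mu_i^2+\mu_j^2+(N-2)\mu_i\mu_j=C$ to be the main obstacle: carefully bookkeeping the multiplicity cases, and in particular the clean use of $N\geq 4$ to exclude three or more distinct eigenvalues, is where the argument has to be pinned down.
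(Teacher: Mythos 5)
Your proposal is correct, and it follows the same overall skeleton as the paper's proof: write $R=\frac{1}{N(N-1)}\mathbf{I}+\frac{2}{N-2}Ric_0(R)\wedge id$, expand $Q(R)$ via Proposition 3.5(4), split $\widetilde{Q}(R)=0$ into its $O(N)$-irreducible components to get $(Ric_0(R)\wedge Ric_0(R))_{{\textbf{W}}}=0$ together with a $\langle{\textbf{Ric}_0}\rangle$-component identity, diagonalize $Ric_0(R)$, and solve. The one genuine difference is in how the Weyl condition is converted into eigenvalue constraints. The paper pairs $Ric_0(R)\wedge Ric_0(R)$ against hand-picked Weyl elements $\xi_1,\xi_2,\xi_p$ built from $A_i\wedge A_j$, obtaining relations such as $(\lambda_2-\lambda_3)(\lambda_1-\lambda_4)=0$ and then running a case analysis; you instead compute the full diagonal of $(T_0\wedge T_0)_{{\textbf{W}}}$ in the eigenbasis, reducing the condition to the closed system $\mu_i^2+\mu_j^2+(N-2)\mu_i\mu_j=C$ for all $i\neq j$, whose difference relations $(\mu_j-\mu_k)\bigl(\mu_j+\mu_k+(N-2)\mu_i\bigr)=0$ exclude three distinct eigenvalues (using $N\geq 4$) and then exclude two eigenvalues each of multiplicity at least $2$. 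Your route is somewhat more systematic, since it visibly uses the entire Weyl component rather than a finite selection of test elements, and the identity $(T_0^2)_0=-(N-2)b\,T_0$ cleanly feeds the spectral pattern into the $\langle{\textbf{Ric}_0}\rangle$ equation to give $(N(N-1)b-1)^2=0$, exactly matching the paper's value $\lambda=\frac{1}{n(n+1)}$. Both arguments use $N=n+1\geq 4$ in the same essential place, and both conclude by noting that a Ricci type operator is determined by its Ricci tensor, so the eigenvalue pattern $\bigl(0,\tfrac1n,\dots,\tfrac1n\bigr)$ identifies $R$ with the normalized curvature operator of $S^n\times\mathbb{R}$ up to the $O(N)$ action.
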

\begin{proof}
It suffices to show that $R$ has the same eigenvalues as the curvature operator of the product metric on {\small $S^n \times \mathbb{R}$} normalized to unit scalar curvature. By assumption, $R$ is of the form {\small $R = \frac{1}{n(n+1)}{\textbf{I}} + R_{\textbf{Ric}_0}$} and {\small $R_{\textbf{Ric}_0} \neq 0$}.

Now {\small $R_{\textbf{Ric}_0} = \frac{2}{n-1}Ric_0(R)\wedge id.$} Injectivity of the map {\small $A \mapsto A\wedge id\;$} for a self adjoint endomorphism $A$ of $\mathbb{R}^{n+1}$, gives {\small $Ric_0(R) \neq 0$}. Using {\small $Q(R) = \|Ric(R)\|^2R$} and  Proposition \ref{propn}, one sees that
{\small \begin{align}\label{four}
\nonumber Q(R)& = \bigg(\frac{1}{(n+1)^2n} + \frac{\|Ric_0(R)\|^2}{(n+1)(n-1)}\bigg)\textbf{I}\\
\nonumber&+ \frac{2}{n(n+1)}(Ric_0\wedge id) - \frac{2}{(n-1)^2}(Ric_0(R)^2)_0 \wedge id +\frac{1}{n-1}(Ric_0(R)\wedge Ric_0(R))\\
\nonumber& = \bigg(\frac{1}{n(n+1)^2} + \frac{\|Ric_0(R)\|^2}{n(n+1)}\bigg) \textbf{I}
+ \frac{2}{n(n+1)}(Ric_0(R)\wedge id)\\
 &- \frac{4}{(n-1)^2}(Ric_0(R)^2)_0 \wedge id + \frac{1}{n-1}\big(Ric_0(R)\wedge Ric_0(R)\big)_{\textbf{W}}
\end{align}}
and
{\small \begin{align}\label{five}
\nonumber \|Ric(R)\|^2R& = \bigg(\frac{1}{n+1} + \|Ric_0(R)\|^2\bigg)\bigg(\frac{1}{n(n+1)}\textbf{I} + R_{\textbf{Ric}_0}\bigg)\\
\nonumber& =\bigg(\frac{1}{n(n+1)^2} + \frac{\|Ric_0(R)\|^2}{n(n+1)}\bigg)\textbf{I}\\
&+ \frac{2}{(n-1)}\bigg(\frac{1}{n+1} + \|Ric_0(R)\|^2\bigg)Ric_0(R)\wedge id
\end{align}}
Combining \eqref{four} and \eqref{five}, one obtains
{\small \begin{align} \label{six}
 \nonumber &(Ric_0(R)\wedge Ric_0(R))_{\textbf{W}} = 0 \ \ {\rm and}\\
& \bigg(\frac{1}{n(n-1)(n+1)} + \frac{\|Ric_0(R)\|^2}{n-1}\bigg)Ric_0(R)\wedge id + \frac{2}{(n-1)^2}(Ric_0(R)^2)_0 \wedge id =0
\end{align}}
Using the injectivity of {\small $A \mapsto A \wedge id$}, the last equation reduces to
{\small \begin{equation}\label{s}
\bigg(\frac{1 + n(n+1)\|Ric_0(R)\|^2}{n(n-1)(n+1)}\bigg)Ric_0(R) + \frac{2}{(n-1)^2}(Ric_0(R)^2)_0 = 0
\end{equation}}
Here $R$ is diagonalized by an orthonormal basis {\small $\{e_i\wedge e_j\}$} where {\small $\{e_i\}^{n+1}_{i=1}$} is an orthonormal basis of {\small $\mathbb{R}^{n+1}$} diagonalizing {\small $Ric_0(R)$}. Then it reduces to proving that {\small $Ric_0(R)$} and the traceless Ricci tensor of the normalized product metric on {\small $S^n \times \mathbb{R}$} have same eigenvalues.

Let $\lambda_i$ be the eigenvalue of {\small $Ric_0(R)$} corresponding to the eigenvector {\small $e_i$, $\,1\leq i\leq n+1$}. Then {\small $tr(Ric_0(R)) = \sum_i{\lambda_i} = 0$}.
As {\small $Ric_0(R) \neq 0,$} at least two of the $\lambda_i$s are not equal.
Then for each $i$, {\small $$Ric_0(R)^2(e_i) = \lambda^2_i e_i\,\; {\rm and}\;\, (Ric_0(R)^2)_0(e_i) =
\big(\lambda^2_i - \frac{1}{n+1}\Sigma_{j=1}^{n+1}{\lambda^2_j}\big)e_i.$$}
Also {\small $Ric_0(R)\wedge Ric_0(R)$} is diagonalized by {\small {\small $\{e_i\wedge e_j\}$} with {\small $$(Ric_0(R)\wedge Ric_0(R))(e_i \wedge e_j) = \lambda_i
\lambda_j e_i \wedge e_j.$$}
Using \eqref{s} one obtains
{\small \begin{equation}\label{new}
\bigg(\frac{1}{n(n+1)(n-1)} + \frac{1}{n-1}{\sum^{n+1}_{p=1}{\lambda^2_p}}\bigg)\lambda_i + \frac{2}{(n-1)^2}\bigg(\lambda^2_i -
\frac{1}{n+1}\sum^{n+1}_{p=1}{\lambda^2_p}\bigg) = 0,\,\,\forall \,i,\,\,\; 1\leq i\leq (n+1).
\end{equation}}
Using \eqref{six} one gets
{\small \begin{equation}\label{ric}
\langle Ric_0(R)\wedge Ric_0(R) , \xi \rangle = 0,\,\,\,\forall \,\xi \in \langle \textbf{W}\rangle.
\end{equation}}
Define {\small $\xi_1, \xi_2 \in S^2_B(\wedge^2\mathbb{R}^n)$} by
{\small $$\xi_1 = (A_1\wedge A_2 + A_3\wedge A_4 - A_1\wedge A_3 - A_2\wedge A_4) \;\,{\rm and} $$}
{\small $$\xi_2 =(A_1\wedge A_4 + A_2\wedge A_3 - \frac{1}{2}A_1\wedge A_2 - \frac{1}{2}A_1\wedge A_3 - \frac{1}{2}A_2\wedge A_4 - \frac{1}{2}A_3\wedge A_4)$$}
where $A_i$ is the self-adjoint endomorphism of $\mathbb{R}^{n+1}$ given by {\small $$A_i(e_j) = \delta_{ij}e_j$$} with $\{e_j\}$ as above. It follows that $Ric(\xi_1) = 0 = Ric(\xi_2)$ i.e., {\small $\xi_1,
\xi_2 \in \langle\textbf{W}\rangle$}.
Then \eqref{ric} gives
{\small $(\lambda_2 - \lambda_3)(\lambda_1 - \lambda_4) = 0$} and \\
{\small $\lambda_1\lambda_2 + \lambda_1\lambda_3 -2(\lambda_1\lambda_4 - \lambda_2\lambda_3) + \lambda_2\lambda_4 + \lambda_3\lambda_4 = 0$}.

Without loss of generality, assume {\small $\lambda_1 \neq \lambda_4$}} i.e., {\small $\lambda_2 = \lambda_3.$}
It follows from the last equation above that {\small $(\lambda_2 - \lambda_4)(\lambda_1 - \lambda_2) = 0$}.
For definiteness let {\small $\lambda_1 = \lambda_2$} i.e., {\small $\lambda_1 = \lambda_2 = \lambda_3 \neq \lambda_4$}.

Choosing {\small $\xi_p = A_1\wedge A_2 + A_p\wedge A_4 - A_1\wedge A_p - A_2\wedge A_4 \in \langle\textbf{W}\rangle$} for each $p \geq 5$ and proceeding
as before, one obtains
{\small $\lambda_i = \lambda\,\, \mbox{say},$} for $i\neq 4$. Consequently, {\small $\lambda_4 = -n\lambda$}.
Using \eqref{new} it follows that
{\small $\lambda = \frac{1}{n(n+1)}$}. Thus the eigenvalues of {\small $Ric_0(R)$} are precisely those of the traceless Ricci tensor of the normalized product
metric on {\small $S^n \times \mathbb{R}$} as discussed in Proposition \ref{ric1} in Section 5.
\end{proof}

\section{Stability of the reaction ODE}

In this section we analyze the stability of the ODE \eqref{reaone} near the curvature operators $\mathfrak{R}$ of symmetric spaces satisfying {small $Q(\mathfrak{R})=\lambda \mathfrak{R}$}, $\lambda >0$.  It follows directly from the following proposition which was suggested to us by Thomas Richard, that \eqref{reaone} is unstable near Einstein curvature operators $R$ that are not multiple of identity and satisfy $Q(R)= \lambda R.$
\begin{prop}\label{thomas}
Let $R^0$ be an algebraic curvature operator which is Einstein with positive scalar curvature but not a multiple of the identity and satisfy $Q(R^0) =\lambda R^0$ for some $\lambda >0.$ Then up to suitable normalization there is a solution curve to the ODE \eqref{reaone} emerging from a point near $R^0$ and approaching the (normalized) Weyl curvature of $R^0$.
\end{prop}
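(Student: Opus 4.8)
The plan is to exploit the fact that $R^0$ being Einstein forces its Weyl part to be an eigenvector of $Q$ with the \emph{same} eigenvalue, which produces a two-dimensional $Q$-invariant plane on which the whole problem collapses to one explicit scalar ODE. First I would write the $O(n)$-irreducible decomposition $R^0 = R^0_{\textbf{I}} + W$, where $R^0_{\textbf{I}} = \frac{s}{n(n-1)}\mathbf{I}$ with $s = s(R^0) > 0$, and $W = R^0_{\textbf{W}} \neq 0$ (the Weyl part is nonzero precisely because $R^0$ is Einstein but not a multiple of the identity). Expanding $Q(R^0) = Q(R^0_{\textbf{I}}) + 2Q(R^0_{\textbf{I}},W) + Q(W)$ and invoking Proposition \ref{propn}(2) (which gives $Q(\mathbf{I},W)=0$ and $Q(\mathbf{I})=(n-1)\mathbf{I}$) together with Proposition \ref{propn}(1) (which says $Q(W)$ is again Ricci-flat, hence lies in $\langle\textbf{W}\rangle$), I would compare the $\langle\textbf{I}\rangle$- and $\langle\textbf{W}\rangle$-components of the hypothesis $Q(R^0)=\lambda R^0$. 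The $\langle\textbf{I}\rangle$-component forces $\lambda = s/n$, and the $\langle\textbf{W}\rangle$-component then yields the crucial identity $Q(W)=\lambda W$.

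With this in hand the plane $\Pi = \mathrm{span}\{\mathbf{I},W\}$ is invariant under $Q$, since $Q(a\mathbf{I}+bW) = a^2(n-1)\mathbf{I} + b^2\lambda W$; thus \eqref{reaone} restricts to the decoupled system $\dot a = (n-1)a^2$, $\dot b = \lambda b^2$ on $\Pi$. Because $s(a\mathbf{I}+bW)=a\,n(n-1)$, the scalar-curvature normalization fixes $a=\frac{1}{n(n-1)}$ along $\mathbf{S}_1\cap\Pi$, where one computes $\|Ric\|^2 = a^2 n(n-1)^2 = \frac{1}{n}$. Applying the reparametrization of Remark \ref{repara} (equivalently, evaluating $\widetilde{Q}$ on $\Pi$) I obtain the single scalar equation
$$\frac{db}{d\tau} = \lambda b^2 - \frac{1}{n}b.$$

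Finally I would read off the phase line of this scalar ODE. It has two equilibria: $b=0$, the normalized identity (round) operator $\frac{1}{n(n-1)}\mathbf{I}$, and $b=b^\ast=\frac{1}{n\lambda}=\frac{1}{s}$, which is exactly the normalized $R^0$. The derivative $2\lambda b-\frac{1}{n}$ equals $+\frac{1}{n}>0$ at $b^\ast$, so the normalized $R^0$ is a repelling fixed point while $b=0$ is attracting. Hence starting from $b=b^\ast+\eps$ with small $\eps>0$ — a point near $R^0$ — one has $\dot b>0$ and $b(\tau)\to+\infty$; along this trajectory $\frac{1}{n(n-1)}\mathbf{I}+bW$ is increasingly dominated by $bW$, so after projective normalization it converges to $W/\|W\|$, the normalized Weyl curvature of $R^0$, and translating back through Remark \ref{repara} produces the asserted solution curve of \eqref{reaone}. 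The only genuine content is the algebraic identity $Q(W)=\lambda W$ and the consequent invariance of $\Pi$; once these are in place the rest is an elementary one-variable analysis, so I expect the main (and modest) obstacle to be the careful bookkeeping of the irreducible components and normalization constants.
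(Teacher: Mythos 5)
Your proposal is correct and follows essentially the same route as the paper: both hinge on the decomposition $R^0 = R^0_{\textbf{I}} + R^0_{\textbf{W}}$ and on using Proposition \ref{propn} (namely $Q(\mathbf{I},W)=0$ and $\lambda = s(R^0)/n$) to deduce $Q(R^0_{\textbf{W}})=\lambda R^0_{\textbf{W}}$, which decouples the flow on the plane spanned by $\mathbf{I}$ and $R^0_{\textbf{W}}$ into two quadratic scalar ODEs whose Weyl component outruns the identity component. The only (cosmetic) difference is that the paper solves the unnormalized system explicitly via $f(t)=1/(1-\lambda t)$ and normalizes at the end, while you pass to \eqref{reatwo} first and read off the repelling equilibrium from the phase line of $\dot b = \lambda b^2 - b/n$.
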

\begin{proof}
Observe that for an Einstein curvature operator $R^0$ as in the hypothesis, one has {\small$s(Q(R^0)) = \lambda s(R^0)$} i.e., {\small$\lambda = \frac{s(R^0)}{n}.$}
Also $R^0$ is of the form {\small$R^0 = R^0_{\textbf{I}} + R^0_{\textbf{W}}$} and it follows from the hypothesis and the properties of $Q(R)$ mentioned in Proposition \ref{propn} that
\begin{equation}\label{split}
Q(R^0_{\textbf{I}}) = \lambda R^0_{\textbf{I}}, \ \ Q(R^0_{\textbf{W}}) = \lambda R^0_{\textbf{W}}
\end{equation}

Let $R(t) = f(t)R^0$ be a solution to the ODE \eqref{reaone}. Then $f$ satisfies $f'(t) = \lambda f^2(t),$ as we have $$Q(R(t)) = Q(f(t)R^0)= f^2(t)Q(R^0)= \lambda f^2(t)R^0$$ and $\frac{dR}{dt} = f'(t)R^0.$

In fact, there is a unique solution $f(t)= \frac{1}{1-\lambda t}$ to the system
\begin{align}\label{function}
\begin{cases}
f'=\lambda f^2;\\
f(0)=1.
\end{cases}
\end{align}
Consider $\widetilde{R} = \alpha_0 R^0_{\textbf{I}}+ R^0_{\textbf{W}}$ where $\alpha_0 \in (0,1)$ and $\alpha_0$ is very close to $1$ i.e., $\widetilde{R}$ is very close to $R^0.$
Then $\widetilde{R}(t) = f(t)\widetilde{R} = f(t)(\alpha_0 R^0_{\textbf{I}}+ R^0_{\textbf{W}})$ is a solution to the ODE \eqref{reaone} for $f$ as above and the product function $\alpha f$ satisfies
\begin{align}\label{function2}
\begin{cases}
(\alpha f)'=\lambda \alpha^2 f^2;\\
\alpha f(0)=\alpha_0.
\end{cases}
\end{align}
Consequently, we obtain $\alpha= \frac{\alpha_0(1- \lambda t)}{1-\lambda \alpha_0 t}$ i.e., $\alpha\rightarrow 0$ as $t\rightarrow \frac{1}{\lambda} < \frac{1}{\lambda\alpha_0}.$

Normalizing the solution curve $\widetilde{R}(t)$ by the norm of its Weyl curvature, one observes that $$\frac{\widetilde{R}(t)}{\widetilde{R}(t)_{\textbf{W}}} =\frac{f(t)}{|f(t)|} \big(\alpha(t)\frac{R^0_{\textbf{I}}}{\|R^0_{\textbf{I}}\|} + \frac{R^0_{\textbf{W}}}{\|R^0_{\textbf{W}}\|}\big) \rightarrow \frac{R^0_{\textbf{W}}}{\|R^0_{\textbf{W}}\|}$$
since $\alpha(t) \rightarrow 0$ and $\frac{f(t)}{|f(t)|}\rightarrow 1$as $t \rightarrow \frac{1}{\lambda}.$ This proves the proposition.
\end{proof}

\begin{rem}\label{thomas2}
Similarly considering the solution curve $\widehat{R}(t)= f(t)( R^0_{\textbf{I}}+ \alpha_0 R^0_{\textbf{W}})$ to the ODE \eqref{reaone} with $R^0, \ \alpha_0$ as above, using similar analysis and normalizing the solution curve by the norm of its identity component, one observes that the normalized curve converges to $\frac{R^0_{\textbf{I}}}{\|R^0_{\textbf{I}}\|}$ which is the identity component of the (normalized) initial point of the solution curve i.e., the identity operator normalized by norm.
\end{rem}
Thus for an Einstein curvature operator $R$ with $s(R) >0$ satisfying $Q(R) =\lambda R,$ one has $\lambda = \frac{s(R)}{n}.$ Then it follows that an Einstein curvature operator $R$ with unit scalar curvature satisfies $Q(R) =\lambda R$ if and only if $R$ is a zero of $\widetilde{Q}.$

In particular, Proposition \ref{thomas} directly proves the unstable behavior of the ODE \eqref{reaone} near the curvature operators of Einstein symmetric spaces which are not space forms and have curvature operators $\mathfrak{R}$ satisfying {small $Q(\mathfrak{R})=\lambda \mathfrak{R}$}, $\lambda >0$.

\subsection{Stability of non-Einstein zeros of $\widetilde{Q}$}

The earlier method fails in proving the stability of a general zero of $\widetilde{Q}.$ In general, using standard ODE technique one notes that in order to prove that an algebraic curvature operator $R$ which is a zero of $\widetilde{Q},$ is unstable, it suffices to show that the derivative operator $D\widetilde{Q}(R): \mathbf{S}_0 \rightarrow \mathbf{S}_0$ has eigenvalues with positive real part.
First we describe the derivative of $\widetilde{Q}$ as follows.

The $O(n)$ invariant irreducible decomposition of {\small $S^2_B(\wedge^2\mathbb{R}^n)$}, gives
$${{\mathbf{S}}}_{0} = \{R \in S^2_B(\wedge^2\mathbb{R}^n) : s(R) = 0\} = \langle {\textbf{Ric}_0} \rangle \oplus \langle {\textbf{W}}\rangle.$$

Let {\small $\;\{\rho_1,\;\rho_2,.....,\rho_{N_1},\;\xi_1,\;\xi_2,....,\xi_{N_2}\}\;$} be an orthonormal basis of  {\small${{\textbf{S}}}_{0}$} such that {\small$\{\rho_{\alpha}\}^{N_1}_{\alpha = 1}$}  and  {\small$ \{\xi_{\beta}\}^{N_2}_{\beta = 1}$}  span  {\small$\langle {\textbf{Ric}_0} \rangle$}  and  {\small$\langle {\textbf{W}}\rangle$} respectively.
Here {\small$N_1,N_2$} denote the respective dimensions of {\small$\langle {\textbf{Ric}_0} \rangle$} and {\small$\langle {\textbf{W}}\rangle.$}

Given $R \in {{\textbf{S}}}_{1},$ it is straightforward to check that
{\small\begin{align}\label{dqrs}
\nonumber &\langle D\widetilde{Q}(R)(\rho_{\alpha_i}),{\rho_{\alpha_j}}\rangle = \textit{tri}(R,{\rho_{\alpha_i}},{\rho_{\alpha_j}}) -\|Ric(R)\|^2{\delta_{ij}} - (n-2){\langle R,{\rho_{\alpha_i}}\rangle}{\langle R,{\rho_{\alpha_j}}\rangle},\;\;{\forall{i,j}}\leq{N_1};\\
\nonumber &\langle D\widetilde{Q}(R)(\rho_{\alpha}),{\xi_{\beta}}\rangle = \textit{tri}(R,{\rho_{\alpha}},{\xi_{\beta}}) - (n-2){\langle R,{\rho_{\alpha}}\rangle}{\langle R,{\xi_{\beta}}\rangle} ;\\
\nonumber &\langle D\widetilde{Q}(R)(\xi_{\beta_k}),{\xi_{\beta_l}}\rangle = \textit{tri}(R,{\xi_{\beta_k}},{\xi_{\beta_l}}) - \|Ric(R)\|^2{\delta_{kl}} \ \ {\forall \ {k,l}} \leq{N_2} \ \ {\rm and}\\
 &\langle D\widetilde{Q}(R)({\xi_{\beta}}),\rho_{\alpha}\rangle = \textit{tri}(R,{\xi_{\beta}},{\rho_{\alpha}}).
\end{align}}
where $\textit{tri}$ is the trilinear form on {\small $S^2_B(\wedge^2\mathbb{R}^n)$} defined in Section 2.
In general, for any {\small $A \in S^2_B(\wedge^2\mathbb{R}^n),$}
{\small \begin{equation}\label{eqgen}
\langle D\widetilde{Q}(R)(A),A \rangle = tri(R,A,A)- \|Ric(R)\|^2 \|A\|^2 - 2\langle Ric(R),Ric(A)\rangle \langle R,A\rangle
\end{equation}}
From the above equations it follows that for an arbitrary {\small $R \in S^2_B(\wedge^2\mathbb{R}^n)$},
{\small $$ \langle D\widetilde{Q}(R)(\rho_{\alpha}),{\xi_{\beta}}\rangle \neq  \langle D\widetilde{Q}(R)({\xi_{\beta}}),\rho_{\alpha}\rangle ,$$}
where {\small $\alpha \in \{1,...,N_1\}$} and {\small $\beta \in \{1,...,N_2\}$}. Thus
{\small$D\widetilde{Q}(R): {{\textbf{S}}}_{0} \rightarrow {{\textbf{S}}}_{0}$} is not self-adjoint in general. This in particular proves that
\begin{lem}
$\widetilde{Q}$ is not a gradient field on {\small $\mathbf{S}_1.$}
\end{lem}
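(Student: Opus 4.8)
The plan is to invoke the classical characterization of gradient fields: a $C^1$ vector field $V$ on a simply connected region is of the form $V = \nabla P$ if and only if its derivative $DV(x)$ is a self-adjoint endomorphism at every point $x$. The hyperplane $\mathbf{S}_1$ is affine, hence simply connected, and its direction (tangent) space at every point is $\mathbf{S}_0$; moreover $\widetilde{Q}$ is tangent to $\mathbf{S}_1$ since $s(\widetilde{Q}(R)) = 0$, so the relevant Jacobian is exactly the map $D\widetilde{Q}(R) : \mathbf{S}_0 \to \mathbf{S}_0$ appearing in \eqref{dqrs}. It therefore suffices to exhibit a single $R \in \mathbf{S}_1$ at which this map fails to be self-adjoint.

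For this I would read off the mixed entries already computed in \eqref{dqrs}. Taking $\rho_\alpha \in \langle \textbf{Ric}_0\rangle$ and $\xi_\beta \in \langle \textbf{W}\rangle$ and using the symmetry of $\textit{tri}$ in all three of its arguments, the two off-diagonal entries differ by a single explicit term:
\[
\langle D\widetilde{Q}(R)(\rho_\alpha), \xi_\beta\rangle - \langle D\widetilde{Q}(R)(\xi_\beta), \rho_\alpha\rangle = -(n-2)\,\langle R, \rho_\alpha\rangle\,\langle R, \xi_\beta\rangle .
\]
Thus self-adjointness fails as soon as one can find $R$ with $s(R)=1$ for which $\langle R, \rho_\alpha\rangle \neq 0$ and $\langle R, \xi_\beta\rangle \neq 0$ for some indices $\alpha,\beta$, that is, an $R \in \mathbf{S}_1$ whose projections onto both $\langle \textbf{Ric}_0\rangle$ and $\langle \textbf{W}\rangle$ are nonzero.

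To finish I would construct such an $R$ explicitly. Writing $R = a\,\mathbf{I} + \rho + \xi$ with $0 \neq \rho \in \langle \textbf{Ric}_0\rangle$, $0 \neq \xi \in \langle \textbf{W}\rangle$, and $a$ chosen so that $s(R)=1$, the $\langle \textbf{Ric}_0\rangle$- and $\langle \textbf{W}\rangle$-projections of $R$ are $\rho$ and $\xi$; being nonzero, some coordinate $\langle R, \rho_\alpha\rangle = \langle \rho, \rho_\alpha\rangle$ and some $\langle R, \xi_\beta\rangle = \langle \xi, \xi_\beta\rangle$ are nonzero. Since the coefficient $n-2$ does not vanish for $n \geq 3$, the displayed difference is then nonzero, so $D\widetilde{Q}(R)$ is not self-adjoint and $\widetilde{Q}$ is not a gradient field. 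Concretely, the curvature operator of a suitably scaled Riemannian product carrying both a nonflat Einstein factor and nonzero Weyl curvature furnishes such an $R$.

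The step that needs the most care is not the computation, which is already recorded in \eqref{dqrs}, but the verification that a point with both projections nonzero genuinely exists. This forces the implicit hypothesis $n \geq 4$, under which $\langle \textbf{W}\rangle \neq 0$ (one also needs $\langle \textbf{Ric}_0\rangle \neq 0$, which holds for $n \geq 3$); for $n = 3$ the Weyl summand is absent and the argument does not apply. I expect this dimensional constraint to be the only genuine subtlety, the necessity-and-sufficiency of the pointwise self-adjointness criterion on the affine space $\mathbf{S}_1$ being standard.
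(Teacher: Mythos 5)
Your argument is correct and is essentially the paper's own: the lemma is read off from the asymmetry of the mixed entries in \eqref{dqrs}, where the two off-diagonal blocks differ by exactly the term $-(n-2)\langle R,\rho_\alpha\rangle\langle R,\xi_\beta\rangle$. You are in fact more careful than the paper, which stops at ``not self-adjoint in general'': your explicit witness $R$ with nonzero $\langle\textbf{Ric}_0\rangle$- and $\langle\textbf{W}\rangle$-components, and your observation that the statement implicitly requires $n\geq 4$ (for $n=3$ the Weyl summand vanishes and $D\widetilde{Q}(R)$ is self-adjoint), are both correct and fill genuine gaps in the paper's one-line justification.
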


Using \eqref{dqrs} one sees that in particular, when $R$ is of Ricci type: Then, {\small $\langle R,{\xi_{\beta}}\rangle = 0,$ $\forall{\beta}$} and {\small $D\widetilde{Q}(R)$} is self-adjoint. With this observation, we prove that
\begin{prop}\label{ric1}
The curvature operator of $S^n \times \mathbb{R}$ with respect to the standard product metric normalized to unit scalar curvature, is an unstable zero of $\widetilde{Q}$.
\end{prop}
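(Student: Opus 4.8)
The plan is to use the criterion recorded just above the statement: since the curvature operator $R$ of $S^n\times\mathbb{R}$ normalized to unit scalar curvature is of Ricci type ($S^n\times\mathbb{R}$ is conformally flat, so $R_{\textbf{W}}=0$), the derivative $D\widetilde{Q}(R)\colon\mathbf{S}_0\to\mathbf{S}_0$ is self-adjoint, its spectrum is therefore real, and it suffices to exhibit a single tangent direction $A\in\mathbf{S}_0$ with $\langle D\widetilde{Q}(R)(A),A\rangle>0$. First I would record the explicit data of $R$ in $\mathbb{R}^{n+1}$: one has $R_{\textbf{I}}=\tfrac{1}{n(n+1)}\mathbf{I}$, the Ricci tensor equals $\tfrac1n$ on the $S^n$-block and $0$ on the $\mathbb{R}$-factor, so $\|Ric(R)\|^2=\tfrac1n$ and $Ric_0(R)=\tfrac{1}{n(n+1)}\,\mathrm{diag}(1,\dots,1,-n)$.

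The natural candidate for an unstable direction is $A=R_{\textbf{Ric}_0}\in\langle\textbf{Ric}_0\rangle\subset\mathbf{S}_0$, the traceless-Ricci part of $R$ itself, i.e.\ the mode $\mathrm{diag}(1,\dots,1,-n)\wedge id$ that redistributes Ricci curvature between the two factors. Using \eqref{eqgen} one has $\langle D\widetilde{Q}(R)(A),A\rangle=\textit{tri}(R,A,A)-\|Ric(R)\|^2\|A\|^2-2\langle Ric(R),Ric(A)\rangle\langle R,A\rangle$, and I would evaluate each piece by splitting $R=R_{\textbf{I}}+R_{\textbf{Ric}_0}$. The contribution $\textit{tri}(R_{\textbf{I}},A,A)$ is immediate from Proposition \ref{propn}(2), which gives $Q(\mathbf{I},A)=\tfrac{n-1}{2}A$ on $\langle\textbf{Ric}_0\rangle$, while the scalars $\|A\|^2$, $\langle R,A\rangle$ and $\langle Ric(R),Ric(A)\rangle$ all reduce to traces of $\mathrm{diag}(1,\dots,1,-n)$ paired with itself.

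The heart of the computation is the self-resonance term $\textit{tri}(R_{\textbf{Ric}_0},A,A)=2\langle Q(R_{\textbf{Ric}_0}),R_{\textbf{Ric}_0}\rangle$, which I would extract from the explicit Ricci-type formula Proposition \ref{propn}(4). Because $A\in\langle\textbf{Ric}_0\rangle$, only the $\langle\textbf{Ric}_0\rangle$-component of $Q(R_{\textbf{Ric}_0})$, namely the term $-\tfrac{4}{(n-1)^2}\big(Ric_0(R)^2\big)_0\wedge id$, survives this pairing, and $\big(\mathrm{diag}(1,\dots,1,-n)^2\big)_0$ is again a multiple of $\mathrm{diag}(1,\dots,1,-n)$, so the whole term collapses to the trace of a third power. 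Assembling the three contributions should then isolate the sign of the quadratic form, producing a positive eigenvalue of $D\widetilde{Q}(R)$ and hence the asserted instability.

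The step I expect to be the main obstacle is precisely this final assembly: the three contributions are of the same order and largely cancel, so the sign of $\langle D\widetilde{Q}(R)(A),A\rangle$ hinges on carrying every constant from Proposition \ref{propn}(2),(4) and from \eqref{eqgen} exactly. Should the purely $\langle\textbf{Ric}_0\rangle$ mode cancel, the fallback is to enlarge the test space and exploit the $O(n)\times O(1)$-equivariance of $D\widetilde{Q}(R)$ to block-diagonalize it along the isotypic decomposition of $\mathbf{S}_0$: the $\#$-operation couples a traceless-Ricci mode $\rho$ with $Ric_0(\rho)=P$ to the Weyl mode $\big(Ric_0(R)\wedge P\big)_{\textbf{W}}$ it generates, and one then analyzes the resulting small self-adjoint block, where a nonzero off-diagonal coupling against the (nonpositive) diagonal entries is what can force an eigenvalue of positive sign.
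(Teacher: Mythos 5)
Your proposal is correct and is essentially the paper's own proof: the paper likewise uses the self-adjointness of $D\widetilde{Q}(R)$ for Ricci-type $R$ and evaluates the quadratic form on the single test direction $\rho = id\wedge A$ with $A$ proportional to $\mathrm{diag}(1,\dots,1,-n)$, which is exactly your candidate $R_{\textbf{Ric}_0}$ up to normalization. The constants do assemble to a positive value, $\frac{2n(n-3)+2}{n^2-1}>0$ for $n\geq 3$, so the fallback via Weyl coupling is not needed.
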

\begin{proof}
The standard product metric $g_p$ on $S^n \times \mathbb{R}$ has constant scalar curvature $n(n-1)$.
Then the normalized metric $g = n(n-1)g_p$ on $S^n \times \mathbb{R}$ is a conformally flat symmetric metric whose curvature operator $\mathfrak{R}$ is a zero of $\widetilde{Q}$ at each point $p \in S^n \times \mathbb{R}$.

Consider any arbitrary point $p =(x,t)\in S^n \times \mathbb{R}.$
Let {\small $ \{e_i\}^{n+1}_{i = 1}$} be an orthonormal basis of the tangent space {\small $T_p(S^n\times \mathbb{R})$} with respect to $g$ such that {\small $\{e_i\}^n_{i=1}$} spans $T_xS^n$.Then one observes that the curvature tensor of $g$ is as follows
\begin{eqnarray*}
(R_g)_{ijkl}=\begin{cases}\delta_{ik} \delta_{jl}\frac{1}{n(n-1)}\,\,\mbox{if}\,\,i\neq j\,\,\mbox{and}\,\, i,j\leq n;\\
0\,\, \ \ \ \ \ \ \ \ \mbox{otherwise}.
\end{cases}
\end{eqnarray*}
Also one has the following
\begin{eqnarray*}
(Ric_g)_{ii} = \begin{cases}\frac{1}{n}\,\, \mbox{if}\,\,\,  i \leq n;\\
0 \,\,\mbox{otherwise}.
\end{cases}
\end{eqnarray*}
\begin{eqnarray*}
(Ric_0)_{ii} = \begin{cases}\frac{1}{n(n+1)}\,\,  \mbox{if}\,\, i \leq n;\\
-\frac{1}{n+1}\,\, \mbox{otherwise}.
\end{cases}
\end{eqnarray*}
i.e., {\small $\|Ric_g\|^2 = tr(Ric_g)^2 = \frac{1}{n}$}.

The curvature operator $\mathfrak{R}$ is of Ricci type given by {\small $\mathfrak{R} = \frac{1}{n(n+1)}{\textbf{I}} + \mathfrak{R}_{\textbf{Ric}_0}$} with {\small $\mathfrak{R}_{\textbf{Ric}_0} = \frac{2}{n-1}Ric_0 \wedge id$}. Here $\mathfrak{R}$ and
{\small $\mathfrak{R}_{\textbf{Ric}_0}$} are both diagonalized by the basis $\{e_i \wedge e_j\}$.
Since {\small $D\widetilde{Q}(\mathfrak{R})$} is self-adjoint, to say that {\small $D\widetilde{Q}(\mathfrak{R})$} has eigenvalues of opposite signs, it suffices to find elements $\xi$ and $\rho$
with {\small $\langle D\widetilde{Q}(\mathfrak{R})(\rho),\rho \rangle >0$} and {\small $\langle D\widetilde{Q}(\mathfrak{R})(\xi),\xi \rangle<0$}.

Now for any {\small $\xi \in \langle \textbf{W} \rangle$} using Proposition \ref{propn} we see
{\small \begin{equation*}
\begin{split} \langle D\widetilde{Q}(\mathfrak{R})(\xi),{\xi}\rangle&
=\textit{tri}(\mathfrak{R},{\xi},{\xi}) -\|Ric_g\|^2\\
& =2\langle Q(\mathfrak{R},{\xi}),{\xi}\rangle -\frac{1}{n} =-\frac{1}{n} <0
\end{split}
\end{equation*}}
This shows that all the directions in {\small $\langle \textbf{W} \rangle$} serve as stable directions for {\small $D\widetilde{Q}(\mathfrak{R}).$}
Similarly, for any {\small $\rho \in \textbf{Ric}_0$} we have
{\small \begin{equation*}
\begin{split}
\langle D\widetilde{Q}(\mathfrak{R})(\rho),{\rho}\rangle&  =\textit{tri}(\mathfrak{R},{\rho},{\rho}) -\|Ric_g\|^2 - (n-1){\langle
\mathfrak{R},{\rho}\rangle}^2\\
& =\textit{tri}(\mathfrak{R},{\rho},{\rho}) -\frac{1}{n} - (n-1){\langle \mathfrak{R},{\rho}\rangle}^2
\end{split}
\end{equation*}}
Let {\small $\rho = id \wedge A$}, where $A$ is the traceless diagonal matrix given by {\small $A = 2\sqrt{\frac{n}{n^2-1}}(D_n + \frac{n-1}{n}D_{n-1} +....+\frac{2}{n}D_2 + \frac{1}{n}D_1)$}, where for each $k$, $(1\leq k \leq n)$; $D_k$  is the traceless diagonal matrix of order $n+1$ with $1,-1$ respectively at
$k$th and $(k+1)$th diagonal places and zero elsewhere. Clearly, $\rho$ is a unit vector in {\small $\langle \textbf{Ric}_0\rangle$} and
{\small \begin{equation*}
\begin{split}
\langle D\widetilde{Q}(\mathfrak{R})(\rho),{\rho}\rangle&
= 2\langle Q(\mathfrak{R},{\rho}),{\rho}\rangle -\frac{1}{n} - (n-1){\langle \mathfrak{R}_{\textbf{Ric}_0},{\rho}\rangle}^2\\
& =\frac{n-1}{n(n+1)} + \frac{2(n-1)}{n(n+1)} - \frac{1}{n} - \frac{2}{n(n^2-1)}\\
& =\frac{2n(n-3)+2}{n^2-1} >0\,\,\forall n\geq3.
\end{split}
\end{equation*}}
Thus for $n\geq 3$, $D\widetilde{Q}(\mathfrak{R})$ has eigenvalues of both signs. This shows that there exist solution curves to the associated ODE \eqref{reaone} emerging from $\mathfrak{R}$ above and moving away and the proposition
follows.
\end{proof}
Again this shows the unstable behavior of the reaction ODE \eqref{reaone} near the product curvature operator of {\small $S^n \times \mathbb{R}.$}

Let $R$ be the curvature operator of the product space $M \times \mathbb{R}^k$ where $M$ is an Einstein symmetric space with positive scalar curvature and not a space form. We prove the following result
as a Corollary to Proposition \ref{thomas}
\begin{cor}\label{cor}
Let {\small $\breve{R} \in S^2_B(\wedge^2\mathbb{R}^{n+k})$} be a product curvature operator of the form
{$$\breve{R}= \left(
               \begin{array}{cc}
                 \widehat{R} & 0 \\
                 0 & 0 \\
               \end{array}
             \right)
$$}
where {\small $\widehat{R}\in S^2_B(\wedge^2\mathbb{R}^{n}$)} is an Einstein algebraic curvature operator with positive scalar curvature satisfying {\small $Q(\widehat{R}) = \lambda \widehat{R}$} for some $\lambda >0.$ Assume further that $\widehat{R}$ is not a multiple of identity operator. Then the ODE \eqref{reaone}
behaves unstably near $\breve{R}$. In fact,  with suitable normalization there exists a solution curve to  \eqref{reaone} emerging from a point near $\breve{R}$ and approaching a curvature operator $R_{\lambda}$ which is of the form
{\small $$R_{\lambda}= \left(
               \begin{array}{cc}
                 \frac{\widehat{R}_{\textbf{W}}}{\|\widehat{R}_{\textbf{W}}\|} & 0 \\
                 0 & 0 \\
               \end{array}
             \right).$$}
\end{cor}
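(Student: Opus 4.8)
The plan is to deduce the corollary from Proposition \ref{thomas} by exploiting the fact that $Q$ respects the Riemannian product decomposition. Since $\widehat{R}$ is Einstein with positive scalar curvature, is not a multiple of the identity, and satisfies $Q(\widehat{R})=\lambda\widehat{R}$, it falls exactly under the hypotheses of Proposition \ref{thomas}. In particular $\widehat{R}=\widehat{R}_{\textbf{I}}+\widehat{R}_{\textbf{W}}$ with $\widehat{R}_{\textbf{W}}\neq 0$ (otherwise $\widehat{R}$ would be a multiple of the identity), so the target operator $R_{\lambda}$ is well defined.

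First I would take as perturbed initial datum the product curvature operator
$$\breve{S}=\left(\begin{array}{cc}\widetilde{R} & 0\\ 0 & 0\end{array}\right),\qquad \widetilde{R}=\alpha_0\widehat{R}_{\textbf{I}}+\widehat{R}_{\textbf{W}},\quad \alpha_0\in(0,1),$$
that is, the very perturbation used in Proposition \ref{thomas}, now placed in the $\mathbb{R}^n$ block. For $\alpha_0$ close to $1$ this operator lies arbitrarily close to $\breve{R}$ in $S^2_B(\wedge^2\mathbb{R}^{n+k})$.

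The key step is to invoke Corollary \ref{coro}: because $Q$ preserves products and $Q(0)=0$, the solution curve of \eqref{reaone} issuing from $\breve{S}$ remains a product for all time,
$$\breve{S}(t)=\left(\begin{array}{cc}\widetilde{R}(t) & 0\\ 0 & 0\end{array}\right),$$
where $\widetilde{R}(t)$ is precisely the $\mathbb{R}^n$ solution curve constructed in Proposition \ref{thomas}. Hence $\widetilde{R}(t)=f(t)\big(\alpha(t)\widehat{R}_{\textbf{I}}+\widehat{R}_{\textbf{W}}\big)$ with $\alpha(t)\to 0$ as $t\to\frac{1}{\lambda}$. Normalizing the full curve by the norm of its Weyl component and letting $t\to\frac{1}{\lambda}$, the identity block is suppressed by the factor $\alpha(t)\to 0$ and the curve converges to $R_{\lambda}$, exactly as in the proof of Proposition \ref{thomas}. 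This produces the desired solution curve emerging near $\breve{R}$ and leaving every small neighborhood, establishing instability.

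The only delicate point is the bookkeeping of the normalization. Since $\breve{S}(t)$ is no longer Einstein in $\mathbb{R}^{n+k}$ when $k>0$ (indeed it is neither Einstein nor of Ricci type), its Weyl decomposition in $\wedge^2\mathbb{R}^{n+k}$ differs from that of the $\mathbb{R}^n$ block. I would check that the embedded operator $\widehat{R}_{\textbf{W}}$ remains Ricci-flat, hence Weyl, in $\mathbb{R}^{n+k}$---the extra sum over the flat directions contributes nothing because those blocks vanish---and that its norm is unchanged by the embedding. The residual Weyl contribution of the embedded identity block $\widehat{R}_{\textbf{I}}$ is multiplied by $\alpha(t)$ and so disappears in the limit, which is exactly why $R_{\lambda}$ is the Weyl direction that survives. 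The product reduction via Corollary \ref{coro} carries all the dynamical content and is clean, so this normalization check is the main (and only minor) obstacle.
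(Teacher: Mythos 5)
Your proposal is correct and follows essentially the same route as the paper: the paper's proof likewise invokes Corollary \ref{coro} to reduce the dynamics to the $\widehat{R}$ block and then states that the rest goes along the lines of Proposition \ref{thomas}. Your additional check that the embedded $\widehat{R}_{\textbf{W}}$ remains Ricci-flat (hence Weyl) in $\wedge^2\mathbb{R}^{n+k}$ with unchanged norm is a detail the paper leaves implicit, but it does not change the argument.
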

\begin{proof}
Using Corollary \ref{coro} it follows that for a curvature operator $R$ of the form {\small $$R = \left(
               \begin{array}{cc}
                R_1 & 0 \\
                 0 & 0 \\
               \end{array}
             \right),$$}
$Q(R)$ is of the form {\small $$Q(R) = \left(
               \begin{array}{cc}
               Q(R_1) & 0 \\
                 0 & 0 \\
               \end{array}
             \right).$$}
The rest of the proof goes along the similar lines as Proposition \ref{thomas}.
\end{proof}
Using Proposition \ref{thomas} and Corollary \ref{cor} one concludes that
\begin{thm}
The reaction ODE \eqref{reaone} behaves unstably near the curvature operator of $M \times \mathbb{R}^k$ for $k \geq 0,$ where $M$ is an Einstein symmetric space of positive scalar curvature and $M$ is not a spherical pace form.
\end{thm}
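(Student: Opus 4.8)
The plan is to reduce this statement entirely to the two preceding results, Proposition \ref{thomas} (for $k=0$) and Corollary \ref{cor} (for $k\geq 1$), so the real work is verifying that the curvature operator of $M$ fits their hypotheses. Write $\mathfrak{R}_M\in S^2_B(\wedge^2\mathbb{R}^n)$, with $n=\dim M$, for the Riemann curvature operator of $M$ normalized to unit scalar curvature. I need exactly two facts: (i) $\mathfrak{R}_M$ is Einstein with positive scalar curvature and satisfies $Q(\mathfrak{R}_M)=\lambda\mathfrak{R}_M$ for some $\lambda>0$; and (ii) $\mathfrak{R}_M$ is not a scalar multiple of the identity operator $\mathbf{I}$.

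To establish (i), I would use that a symmetric space has parallel curvature, so $\Delta R_g=0$; since $M$ is Einstein of unit scalar curvature, identity \eqref{laplacee} then forces $Q(\mathfrak{R}_M)=\frac{1}{n}\mathfrak{R}_M$, so that $\mathfrak{R}_M$ is a zero of $\widetilde{Q}$ and $\lambda=\frac{1}{n}>0$ (consistent with Proposition \ref{symm}). For (ii), a scalar multiple of the identity on $\wedge^2\mathbb{R}^n$ is precisely the curvature operator of a space of constant sectional curvature; in positive scalar curvature this would make $M$ a spherical space form, contrary to hypothesis. Hence both the identity component $(\mathfrak{R}_M)_{\textbf{I}}$ and the Weyl component $(\mathfrak{R}_M)_{\textbf{W}}$ of $\mathfrak{R}_M$ are nonzero.

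With (i) and (ii) in hand the two cases are immediate. For $k=0$, applying Proposition \ref{thomas} with $R^0=\mathfrak{R}_M$ produces, after the scalar-curvature normalization, a solution curve of \eqref{reaone} starting arbitrarily close to $\mathfrak{R}_M$ whose normalized trajectory converges to $\frac{R^0_{\textbf{W}}}{\|R^0_{\textbf{W}}\|}$; because $R^0$ carries a nonzero identity part this limit is distinct from the normalized $R^0$, so the orbit leaves every fixed neighborhood and \eqref{reaone} is unstable at $\mathfrak{R}_M$. For $k\geq 1$, the curvature operator of the product $M\times\mathbb{R}^k$ is the product curvature operator $\breve{R}$ on $\wedge^2\mathbb{R}^{n+k}$ whose block on $\wedge^2\mathbb{R}^n$ equals $\widehat{R}=\mathfrak{R}_M$ and whose remaining entries vanish, since the Euclidean factor contributes no curvature. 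By (i) and (ii) this $\widehat{R}$ satisfies every hypothesis of Corollary \ref{cor}, and that corollary yields a normalized solution emanating from near $\breve{R}$ and approaching the distinct operator $R_\lambda$, giving instability in the product case. Combining the two cases proves the theorem.

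The step I expect to demand the most care is the reduction carried out in (i) and (ii): translating the geometric hypotheses \emph{Einstein symmetric} and \emph{not a spherical space form} into the precise algebraic conditions $Q(\mathfrak{R}_M)=\lambda\mathfrak{R}_M$ with $\lambda>0$ and $\mathfrak{R}_M\neq c\,\mathbf{I}$ required to invoke the cited results. Once these are verified, no further computation is needed; the instability is a direct consequence of Proposition \ref{thomas} and Corollary \ref{cor}.
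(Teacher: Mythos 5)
Your proposal is correct and follows essentially the same route as the paper, which derives this theorem directly from Proposition \ref{thomas} (the case $k=0$) and Corollary \ref{cor} (the case $k\geq 1$). The only difference is that you spell out the verification of the hypotheses — that parallel curvature plus the identity \eqref{laplacee} gives $Q(\mathfrak{R}_M)=\lambda\mathfrak{R}_M$ with $\lambda>0$, and that excluding spherical space forms is exactly what guarantees $\mathfrak{R}_M\neq c\,\mathbf{I}$ — which the paper leaves implicit.
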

Note that the Corollary \ref{cor} does not hold when $R$ is the curvature operator of $S^n \times \mathbb{R}^k,$ $k \geq 1$.
A proof of the unstable behavior of $Q$ near curvature operator of $S^n \times \mathbb{R}^k,$ $k \geq 1;$ is given by the following proposition
\begin{prop}\label{spaceform}
The behavior of the reaction ODE \eqref{reaone} is unstable near the product curvature operator of $S^n \times \mathbb{R}^k$ with respect to the standard product metric.
\end{prop}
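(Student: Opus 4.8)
\emph{Plan.} I would exploit the symmetry of the fixed point. Since $Q$ is $O(n+k)$-equivariant, it is in particular $O(n)\times O(k)$-equivariant, where $O(n)\times O(k)$ acts on $\mathbb{R}^{n+k}=\mathbb{R}^n\oplus\mathbb{R}^k$; hence the $3$-dimensional space $\mathcal{P}$ of $O(n)\times O(k)$-invariant algebraic curvature operators is preserved by \eqref{reaone}. This $\mathcal{P}$ is spanned by the operators $\Pi_1,\Pi_2,\Pi_3$ acting as the orthogonal projections of $\wedge^2\mathbb{R}^{n+k}$ onto $\wedge^2\mathbb{R}^n$, $\mathbb{R}^n\wedge\mathbb{R}^k$ and $\wedge^2\mathbb{R}^k$, so any invariant operator is $R=x\Pi_1+y\Pi_2+z\Pi_3$, and $x,y,z$ are precisely the sectional curvatures on the three types of coordinate $2$-planes. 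Such an $R$ is diagonal in $\{e_i\wedge e_j\}$, so $R^{\#}$ is diagonal with $(R^{\#})_{ij,ij}=\sum_{l\neq i,j}\lambda_{il}\lambda_{jl}$; counting the contributions over the three plane-types reduces \eqref{reaone} on $\mathcal{P}$ to
\[
\dot{x}=(n-1)x^2+k\,y^2,\qquad \dot{y}=y\big((n-1)x+(k-1)z+y\big),\qquad \dot{z}=(k-1)z^2+n\,y^2 .
\]
The curvature operator $\mathfrak{R}$ of $S^n\times\mathbb{R}^k$ (unit scalar) is the point $(x,y,z)=(\kappa,0,0)$ with $\kappa=\tfrac1{n(n-1)}$, which one checks is a zero of the normalized field $\widetilde{Q}$.

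Next I would prove instability \emph{inside} $\mathcal{P}$: because $\mathcal{P}$ is flow-invariant, any trajectory in $\mathcal{P}$ escaping $\mathfrak{R}$ is a genuine escaping trajectory, and by Remark \ref{repara} it suffices to work with the (unnormalized) reaction ODE through the scale-invariant ratio $w=y/x$. A short computation gives
\[
x^2\,\dot{w}=x\dot{y}-y\dot{x}=(k-1)xyz+xy^2-k\,y^3 ,
\]
which is strictly positive for small $y>0$, $z\ge 0$, $x>0$. Since $\dot{x}>0$ and $\dot{z}\ge 0$ keep $x>0$ and $z\ge 0$, the cone $\{y>0\}$ near $\mathfrak{R}$ is forward invariant and $w$ increases monotonically along every trajectory in it (with $\dot w\approx \kappa\,w^2$); being scale-invariant, $w$ therefore also increases on the normalized slice $\{s=1\}$. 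By Chetaev's criterion the normalized zero $\mathfrak{R}$ is unstable, i.e. perturbing $\mathfrak{R}$ by a small positive multiple of $\Pi_2$ (positive mixed sectional curvature) yields a normalized solution of \eqref{reaone} leaving a fixed neighbourhood of $\mathfrak{R}$.

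The main obstacle is that this instability is \emph{not} linear, so the method used for the Einstein case cannot be copied. Linearizing $\widetilde{Q}$ at $(\kappa,0,0)$ on $\mathbf{S}_0\cap\mathcal{P}$ produces eigenvalues $-\tfrac1n$ (the attracting product-of-spheres direction) and $0$ (exactly the mixed direction $\Pi_2$); more generally a computation with \eqref{eqgen} shows that the Weyl directions, the trace-free sphere directions, and the traceless Ricci direction $\mathrm{id}\wedge(kP_{\mathbb{R}^n}-nP_{\mathbb{R}^k})$ are all neutral or stable, so $D\widetilde{Q}(\mathfrak{R})$ has no eigenvalue of positive real part. Consequently the transplantation of Proposition \ref{thomas} fails here (the splitting $R^0=R^0_{\mathbf I}+R^0_{\mathbf W}$ degenerates for a space form), and the destabilizing mechanism only appears at second order. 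The delicate step is thus to compute the reduced system on $\mathcal{P}$ correctly — in particular the sharp cross-terms coupling the three plane-types — and to extract the quadratic term $\dot w\sim \kappa\,w^2$ with its destabilizing sign, which is precisely what the reduction to the invariant family $\mathcal{P}$ makes transparent.
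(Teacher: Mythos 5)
Your proof is correct and its core coincides with the paper's: the paper likewise restricts \eqref{reaone} to curvature operators diagonal in the basis $\{e_i\wedge e_j\}$ adapted to $\mathbb{R}^{n+k}=\mathbb{R}^n\oplus\mathbb{R}^k$, with eigenvalues $x,y,z$ on the three plane types, and arrives at exactly your reduced system $\dot x=(n-1)x^2+ky^2$, $\dot y=y\big(y+(n-1)x+(k-1)z\big)$, $\dot z=(k-1)z^2+ny^2$ (you justify the invariance of this family by $O(n)\times O(k)$-equivariance, the paper by computing $R^{\#}$ directly on the ansatz; same content). The endgames differ. The paper follows Hamilton's device: it passes to the associated system $\dot V=\phi(V)-\rho(V)V$ with $\rho=y+(n-1)x+(k-1)z$, sets $y=1$, and integrates the resulting planar system in the variables $x\pm\frac{k-1}{n-1}z$ to conclude that trajectories with $y>0$ converge, after normalization, to $x=y=z=1$, i.e.\ to the curvature operator of the round $S^{n+k}$. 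Your route --- monotonicity of the scale-invariant ratio $w=y/x$ via $x^{2}\dot w=xy^{2}+(k-1)xyz-ky^{3}>0$ on $\{x>0,\ z\ge 0,\ 0<y\ll x\}$, plus a Chetaev argument transferred to the normalized flow --- is shorter and gives instability, but not the extra information that the escaping orbits limit onto the constant-curvature operator. Two small points you should make explicit: $\{y>0\}$ is preserved because $\dot y$ carries the overall factor $y$, and $w$ actually reaches a fixed positive threshold before the unnormalized solution blows up (reparametrize by $d\sigma=x\,dt$, note that $\int x\,dt$ diverges at the blow-up time since $\dot x\ge (n-1)x^{2}$, and use $dw/d\sigma\ge w^{2}(1-kw)$). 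Finally, your side remark that $D\widetilde{Q}(\mathfrak{R})$ is degenerate in the mixed direction, so that the instability is genuinely second order, checks out: expanding $\widetilde{Q}$ at $\mathfrak{R}+\varepsilon\big(-\tfrac{2}{n-1}\Pi_1+\Pi_2\big)$, the first-order term cancels identically. This is in tension with the positive eigenvalue asserted in Proposition \ref{ric1} for $k=1$ (the discrepancy traces to the coefficient of the last term in \eqref{dqrs} versus the general formula \eqref{eqgen}), and is precisely why a nonlinear mechanism such as yours, or the paper's explicit integration, is needed here.
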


It will be evident from the proof of the above proposition that in particular,
there exists solution curves to the ODE \eqref{reaone} which emerge from the curvature operators of $S^n \times \mathbb{R}^k$ and approach the curvature operator of the spherical space form of dimension $n+k$.

\begin{proof}
We follow Hamilton's method \cite{H3} to conclude the desired result.
Consider an orthonormal basis $\mathfrak{B} = \{e_i \wedge e_j\}$ of {\small $\wedge^2\mathbb{R}^{n+k}$} where $\{e_i\}_{i=1}^{n+k}$ is an orthonormal basis of {\small $\mathbb{R}^{n+k} = \mathbb{R}^n \oplus \mathbb{R}^k$} such that $\{e_i\}_{i=1}^{n}$ and $\{e_i\}_{i=n+1}^{n+k}$ span $\mathbb{R}^n$ and $\mathbb{R}^k$ respectively. Let {\small $R \in S^2_B(\wedge^2\mathbb{R}^{n+k})$} be diagonalized by the above basis as follows
\begin{equation}\label{sphere}
R(e_i \wedge e_j) = \begin{cases} x e_i \wedge e_j \ \ {\rm if} \ \ 1\leq i < j \leq n;\\
                                  y e_i \wedge e_j \ \ {\rm if} \ \ i \leq n, \ j \geq n+1;\\
                                  z e_i \wedge e_j \ \ {\rm if} \ \  i,  j \geq n+1.
\end{cases}
\end{equation}
Note that the curvature operator of $ S^n \times \mathbb{R}^k$ with respect to the product metric satisfies the above equations for $x = 1, \ y=z= 0.$
Also the curvature operator of $S^{n+k}$ with respect to the round metric satisfies the above equations for $x =y=z= 1.$

Identifying {\small $\wedge^2\mathbb{R}^{n+k}$} with the Lie algebra $\mathfrak{so}(n+k)$ canonically and using the standard Lie bracket relations among the basis elements of $\mathfrak{B}$ it follows that if $R$ satisfies \eqref{sphere}, then $Q(R)$ is of the following form
\begin{align}
Q(R)(e_i \wedge e_j) = \begin{cases} \big((n-1)x^2 + ky^2 \big) e_i \wedge e_j \ \ \ \ \ \ \ \ \ \ \ \ \ \ {\rm if} \ \ 1\leq i < j \leq n;\\
                                     y \big(y + (n-1)x+ (k-1)z\big) e_i \wedge e_j \ \ {\rm if} \ \ i \leq n, \ j \geq n+1;\\
                                    (k-1)z^2 + ny^2 e_i \wedge e_j \ \  \ \ \ \ \ \ \ \ \ \ \ \ \ \ \ {\rm if} \ \ i,  j \geq n+1.
\end{cases}
\end{align}
Thus ODE \eqref{reaone} reduces to the following system of equations
\begin{align}
\begin{cases}
\frac{dx}{dt} = (n-1)x^2 + ky^2 \\
\frac{dy}{dt} = y(y + (n-1)x + (k-1)z)\\
\frac{dz}{dt} = (k-1)z^2 + ny^2
\end{cases}
\end{align}
The above system is of the form
{\small $\frac{dV}{dt}= \phi(V)$}
where $\phi(V)=((n-1)x^2 + ky^2, y(y + (n-1)x +(k-1)z)$ and $V=(x,y,z)$.
Define $\rho(V)= y + (n-1)x + (k-1)z$ and consider the associated system
{\small $\frac{dV}{dt}= \phi(V)-\rho(V)V.$}

In the associated system $\frac{dy}{dt}=0.$ In particular putting $y =1$ the associated system becomes
\begin{align}\label{normalized}
\begin{cases}
\frac{dx}{dt} = k - x - (k-1)xz\\
\frac{dz}{dt} = n - z - (n-1)xz\\
y =1
\end{cases}
\end{align}
which is same as
\begin{align*}
\begin{cases}
\frac{d\breve{x}}{dt} = \frac{2nk-(n+k)}{n-1} - \breve{x} - \frac{1}{2}(n-1)(\breve{x}^2 - \breve{z}^2)\\
\frac{d\breve{z}}{dt} = \frac{n-k}{n-1} -\breve{z} \\
y =1
\end{cases}
\end{align*}
where $\breve{x}= x+ \frac{k-1}{n-1}z, \ \ {\rm and} \ \ \breve{z}=x- \frac{k-1}{n-1} z.$

The only solution to this system corresponds to  $\breve{z} \rightarrow  \frac{n-k}{n-1}, \ \ \breve{x} \rightarrow \frac{n+k-2}{n-1}$ i.e., $$x+ \frac{k-1}{n-1}z \rightarrow \frac{n+k-2}{n-1}, \ \ x- \frac{k-1}{n-1}z  \rightarrow \frac{n-k}{n-1}$$ which corresponds to the following solution $$x \rightarrow 1, \ \ z \rightarrow 1, \ \ y=1$$ of the associated system \eqref{normalized}.
This clearly says that there exist solution curves to the original system with $\frac{x}{y}, \frac{z}{y} \rightarrow 1$ i.e., there exists solution curve to the reaction ODE \eqref{reaone}
which emerges from the curvature operator of $S^n \times \mathbb{R}^k$ and approaches the curvature operator of the round sphere $S^{n+k}$.

\end{proof}
\begin{rm}
Proposition \ref{spaceform} and Corollary \ref{cor} together complete the proof of Theorem \ref{main}.
\end{rm}

\end{document}